\documentclass[a4paper,12pt]{amsart}
\usepackage[utf8]{inputenc}

\usepackage{amsmath,amssymb,amsthm}
\usepackage{amsaddr}
\usepackage{bbm}
\usepackage{bm}
\usepackage{xspace}
\usepackage{csquotes}
\usepackage{hyperref} 
\usepackage{makecell}

\usepackage[x11names,table]{xcolor}
\usepackage{graphicx}
\graphicspath{ {img/} }
\usepackage{tikz}
\usetikzlibrary{calc}
\usetikzlibrary{cd}
\usetikzlibrary{3d}
\usepackage{csquotes}
\usepackage{booktabs}
\hypersetup{
	colorlinks = true,          
	urlcolor = DeepSkyBlue3, linkcolor = Chartreuse4, citecolor = DarkOrange2
}
\usepackage{fullpage}
\RequirePackage[backend=bibtex,isbn=false,url=false,maxbibnames=5,bibencoding=utf8]{biblatex}
\renewbibmacro{in:}{, }
\DeclareFieldFormat[misc]{title}{\mkbibquote{#1}}
\DeclareFieldFormat[article]{volume}{\mkbibbold{#1}}
\DeclareFieldFormat{url}{\textsc{url}: \href{#1}{#1}}
\DeclareFieldFormat{doi}{\textsc{doi}: \href{http://dx.doi.org/#1}{#1}}
\DeclareFieldFormat{eprint:arxiv}{arXiv:\href{https://arxiv.org/abs/#1}{#1}}
\addbibresource{biblio.bib}


\newcommand\OSCAR{\texttt{OSCAR}\xspace}

\newcommand\HC{\texttt{Homotopy\-Continuation.jl}\xspace}
\newcommand\msolve{\texttt{msolve}\xspace}
\newcommand\NN{\mathbb{N}}
\newcommand\ZZ{\mathbb{Z}}
\newcommand\RR{\mathbb{R}}
\newcommand\CC{\mathbb{C}}
\newcommand\CP{\mathbb{P}} 
\newcommand\RP{\mathbb{RP}} 
\newcommand\Sph{\mathbb{S}} 
\newcommand\cA{\mathcal{A}}

\DeclareMathOperator{\conv}{conv}
\DeclareMathOperator{\nvol}{nvol}
\DeclareMathOperator{\SL}{SL} 

\renewcommand\phi{\varphi} 

\theoremstyle{theorem}
\newtheorem{theorem}{Theorem}
\newtheorem{theorem*}[theorem]{Theorem*}
\newtheorem{lemma}[theorem]{Lemma}
\newtheorem{proposition}[theorem]{Proposition}
\newtheorem{corollary}[theorem]{Corollary}

\theoremstyle{definition}
\newtheorem{example}[theorem]{Example}
\theoremstyle{remark}
\newtheorem{remark}[theorem]{Remark}

\newcommand{\setof}[2]{\{#1\;|\;#2\}}

\title{Wronski Pairs of Honeycomb Curves}

\author{Laura Casabella}
\address[Laura Casabella]{
  \vspace{-0.3cm}Max-Planck Institute for Mathematics in the Sciences, Leipzig, Germany\\
  \texttt{laura.casabella@mis.mpg.de}
}

\author{Michael Joswig}
\address[Michael Joswig]{
  \vspace{-0.3cm}Chair of Discrete Geometry/Mathematics\\
  Technische Universität Berlin, Germany and\\
  Max-Planck Institute for Mathematics in the Sciences, Leipzig, Germany \\
  \texttt{joswig@math.tu-berlin.de}
}

\author{Rafael Mohr} 
\address[Rafael Mohr]{
  \vspace{-0.3cm}Inria Saclay, Palaiseau, France\\
  \texttt{rafael.mohr@inria.fr}
}
\thanks{The third author is the corresponding author.}

\subjclass[2020]{%
14M25, 
52B20, 
65H10, 
13P10
}

\begin{document}

\begin{abstract}
  We study certain generic systems of real polynomial equations associated with triangulations of convex polytopes and investigate their number of real solutions.
  Our main focus is set on pairs of plane algebraic curves which form a so-called Wronski system.
  The computational tasks arising in the analysis of such Wronski pairs lead us to the frontiers of current computer algebra algorithms and their implementations, both via Gröbner bases and numerical algebraic geometry.
\end{abstract}
\smallskip

\begingroup
\def\uppercasenonmath#1{} 
\let\MakeUppercase\relax 
\maketitle
\endgroup

\noindent \textbf{Keywords.} Toric Varieties; Real Solutions; Newton Polyhedra; Lattice Polytopes;\\ Numerical Algebraic Geometry; Gröbner Bases
\section{Introduction}

Our objects of study are systems of polynomial equations with finitely many solutions.
When those systems come in parametric families it is desirable to obtain bounds for the number of solutions which are uniform for the entire family.
There is a very satisfying theory over the complex numbers, which has been established by Bernstein, Kushnirenko and Khovanskii; see \cites[Chapter~3]{solving}[\S7.5]{CoxLittleOShea:2005} for textbook references.
They observed that natural bounds of this kind are encoded in metric properties of the Newton polytopes of the polynomials involved.
It should be stressed that this theory goes beyond mere existence results.
In fact, the method of homotopy continuation offers competitive algorithms for solving systems of complex polynomial equations based on this theory.
Implementations include \texttt{PHCpack} \cite{phcpack}, \texttt{Bertini} \cite{bertini} and \HC \cite{breiding2018}.
Yet, in many practical applications only the real solutions matter.
This observation led to a quest for real versions of the theorems of Bernstein, Kushnirenko and Khovanskii.
As a breakthrough Soprunova and Sottile \cite{SoprunovaSottile:2006} obtained a real version of Kushnirenko's theorem for so-called \emph{Wronski systems}; see also \cite{Sottile:2011}.
The latter result is concerned with the unmixed case, where the Newton polytopes of all polynomials of the system agree.
Obtaining nontrivial lower bounds for the number of real solutions of a polynomial system is not so easy, for the simple reason that many systems do not have any real roots at all.
The main idea of \cite{SoprunovaSottile:2006} is to study the real toric variety associated to an unmixed system of $d$ real polynomials in $d$ variables.
When the Newton polytope admits a very special kind of triangulation (it needs to be regular, dense and foldable), then that real toric variety admits a map to the real projective space $\RP^{d-1}$.
By passing to double covers we obtain a map between smooth submanifolds which are often orientable (and satisfy an extra condition); then the degree of that map serves as a lower bound for the number of real roots.

It has been stressed in a list of open problems \cite[\S7.4]{Sottile:2011} that some aspects of the theory remain
unclear, probably because only a small number of examples has been worked out in detail.  So, one purpose of this paper
is to study the applicability of the method developed by Soprunova and Sottile \cite{SoprunovaSottile:2006} to pairs
of plane algebraic curves with full support of a given degree. In that case the common Newton polygon of the two curves
is a scaled unit triangle $\delta\Delta_2=\conv\{(0,0),(\delta,0),(0,\delta)\}$, where $\delta$ is the degree.  The triangle
$\delta\Delta_2$ admits a particular regular, dense and foldable triangulation which has been called the \emph{honeycomb
  triangulation} in \cite{BJMS:2015}.  Up to a unimodular transformation of the plane, this agrees with the
\emph{alcoved triangulation} of Lam and Postnikov \cite{Lam+Postnikov:2007}.  By fixing the Newton polygon
$\delta\Delta_2$ and the honeycomb triangulation, in order to apply the results from \cite{SoprunovaSottile:2006}, the only
nontrivial ingredient left are the \enquote{extra conditions} mentioned above. These conditions amount to checking that
specific systems of three polynomial equations in three indeterminates do not have undesirable solutions.  These
\emph{meta-systems}, as we call them, depend on the degree $\delta$ and the choice of a height function inducing the
honeycomb triangulation. The meta-systems are highly structured and turn out to be computationally challenging for
higher degrees. In this paper we attempt to exhibit the frontiers of computation for current versions of
\HC and the symbolic library \msolve \cite{berthomieu2021} on these meta-systems.

The paper is organized as follows.
Section~\ref{sec:wronski} summarizes results of \cite{SoprunovaSottile:2006} and \cite[Chapter 7]{Sottile:2011} to fix our notation.
One aspect that did not receive much attention in the literature so far is the fact that the specific height function for the regular triangulation plays a crucial role.
Recall that, by definition, a triangulation of a point configuration is regular if it is induced by a height function; cf.\ \cite[Chapter 5]{Triangulations}.
As our first contribution we show that a height function which satisfies the \enquote{extra conditions}, at least in the torus, always exists (Proposition~\ref{prop:resultant}).
The argument makes use of sparse resultants; cf.\ \cite{gelfand1994,bender2024}.
Assuming that the triangulation satisfies an additional local smoothness condition, we can show that the \enquote{extra conditions} are satisfied globally (Theorem~\ref{thm:lifting-exists}).
The Section~\ref{sec:alcoved} covers the polyhedral geometry part of the story, with a focus on alcoved polytopes, honeycomb polygons and their triangulations.
Here we give an explicit description of the secondary cone of the honeycomb triangulations of $\delta\Delta_2$ for arbitrary $\delta$ (Proposition~\ref{prop:liftfunc}).
This open polyhedral cone forms the set of all admissible height functions.
The final Section~\ref{sec:compute} then collects our second contribution, which is computational.
Here we study the meta-systems, which check the \enquote{extra conditions}, directly.
In our scenario $P=\delta\Delta_2$ is a honeycomb triangle with $3\leq\delta\leq 17$.
For $\omega$ we consider two different functions, which behave quite differently.
First we pick a function which occurs, e.g., in work of Sinn and Sjöberg \cite{sinn2021}.
Our second height function is the minimal height function described in Remark~\ref{rem:minimal}.
With \HC we obtain computational results related to these \enquote{extra conditions} for odd $\delta\leq 17$ where as with
\msolve we obtain results for odd $\delta \leq 11$.

\subsection*{Acknowledgements}
We thank Matías Bender for helpful discussions concerning the proof of Proposition~\ref{prop:resultant}, and Lorenzo Baldi for valuable comments.  MJ is very
grateful for the generous hospitality of the CMAP lab at \'Ecole Polytechnique while co-writing this article.
The work of MJ is partially supported by the
Deutsche Forschungsgemeinschaft (DFG, German Research Foundation); \enquote{MaRDI (Mathematical Research Data
  Initiative)} (NFDI 29/1, project ID 460135501); The Berlin Mathematics Research Center MATH$^+$ (EXC-2046/1, project ID 390685689).
The work of RM is partially supported by the Forschungsinitiative Rheinland-Pfalz and by the
European Research Council (ERC) under the European Union’s Horizon Europe research and innovation programme;
\enquote{10000 DIGITS} (project ID 101040794).
Further, MJ and RM both received DFG support through \enquote{Symbolic Tools in Mathematics and their Application} (TRR 195, project ID 286237555).

\section{Systems of Wronski polynomials} \label{sec:wronski}

We start out by fixing our notation and by describing key results from the literature that are relevant for our work.
In fact, our exposition follows \cite[Chapter~7]{Sottile:2011}.
For further details, the reader is referred to Sottile's monograph.
Our goal is to construct systems of polynomial equations from very special triangulations of lattice polytopes.
These systems come with a natural lower bound for the number of their real solutions.

Let $P$ be a $d$-dimensional lattice polytope, and let $\cA=P\cap\ZZ^d$ be the set of lattice points of $P$.
We assume that the point configuration $\cA$ is primitive, i.e., it affinely generates the entire lattice $\ZZ^d$.
A triangulation $T$ of $P$ is \emph{dense} (or \enquote{full}) if the vertex set of $T$ agrees with $\cA$.
Further, the triangulation $T$ is \emph{foldable} if its graph (i.e., the $1$-skeleton of $T$) admits a proper coloring with $d+1$ colors.
That is the case if and only if the dual graph of $T$ is bipartite; this follows from \cite[Corollary 11]{Joswig:2002}. 
In loc.\ cit.\ and elsewhere foldable triangulations are also called \enquote{balanced}.
Note that the coloring of the vertices of a foldable triangulation is unique, up to relabeling.

So let $T$ be a dense and foldable triangulation of $P$, and let $\ell: \cA \to \{0, 1, \dots, d\}$ be a proper coloring of the points in $\cA$. 
Choosing positive constants $\kappa=(\kappa_a \mid a\in\cA)$, and $d{+}1$ real parameters $c_0,c_1,\dots,c_d$, we obtain a family of \emph{Wronski polynomials}
\begin{equation}\label{eq:wronski}
  W_{\kappa,c}(x) \ = \ \sum_{a \in \cA} c_{\ell(a)}\cdot \kappa_a x^a \enspace .
\end{equation}
where $x = (x_1,\dots,x_d)$. We do admit arbitrary integer vectors as exponents, whence Wronski polynomials are actually
Laurent polynomials.  By choosing $d$ parameter vectors $c=(c_0,c_1,\dots,c_d)$ we obtain $d$ polynomials
\eqref{eq:wronski}, and they form a \emph{system of Wronski polynomials}.  Note that the positive constants $\kappa$ are not
varied.  As the dual graph of $T$ is bipartite, we may distinguish between black and white maximal cells.  The
\emph{signature} of $T$, denoted $\sigma(T)$, is defined as the absolute difference of the number of black and white maximal
cells of $T$, among those maximal cells whose normalized volume is odd.  The normalized volume, $\nvol(\cdot)$, is $d!$
times the Euclidean volume.  Under technical conditions the signature becomes the lower bound for the number of real
solutions of a system of Wronski polynomials.

Before we proceed to explain those conditions, let us consider the univariate case $d=1$, where $P$ is an interval $[a,b]$ in the real line with integral endpoints.
The number of lattice points in $P$ equals $b-a+1$, and the $b-a$ consecutive unit intervals between them form a dense and foldable triangulation.
Fixing a positive vector $\kappa$ and a vector $c=(c_0,c_1)$ with $c_i\neq0$, we get a Wronski polynomial system with a single univariate Wronski polynomial, $W=W_{\kappa,c}\in\CC[x^\pm]$.
Since we  are in the toric setting, we are interested in the roots of $W$ in the $1$-dimensional algebraic torus $\CC^{\times}=\CC\setminus\{0\}$.
The number of nonzero roots does not change if $W$ is multiplied by $x^{-a}$.
In other words, we may assume that $a=0$, and the degree of $W$ equals $b$.
Finally, the signature, $\sigma$, equals zero if $b$ is even, and it is one if $b$ is odd.
And so $\sigma$ provides a lower bound for the number of nonzero real roots of $W$.
Observe that any univariate real polynomial with full support is a Wronski polynomial.

We return to the multivariate setting, where the situation is more involved.
Setting $m=|\cA|$ we consider the $(m{-}1)$-dimensional complex projective space $\CP^\cA$ with coordinates $(z_a \mid a \in\cA)$.
The projective toric variety $X_\cA\subset\CP^\cA$ parameterized by the monomials $(x^a \mid a \in\cA)$ is the Zariski closure of the map
\[
  \phi_\cA \,:\, (\CC^{\times})^d \to \CP^\cA \ , \quad x \mapsto (x^a \mid a \in\cA) \enspace.
\]
As $\cA$ is primitive, the map $\phi_A$ is a bijection, and the toric variety $X_A$ is normal.
The latter property forces that $X_A$ is smooth in codimension one.
Restricting to the real points yields the real toric variety $Y_\cA=X_\cA\cap\RP^\cA$.
The $(m{-}1)$-dimensional sphere $\Sph^\cA$ is the double cover of $\RP^\cA$.
The pre-image of $Y_\cA$ with respect to the covering map, is the spherical toric variety $Y_\cA^+$.

Now we make the additional assumption that the triangulation $T$ of $\cA$ is regular.
That is, there is a function $\omega:\cA\to\NN$ such that $T$ is the image of the orthogonal projection (omitting the last coordinate) of the lower convex hull of the lifted points $\setof{(a,\omega(a))}{a\in \cA}$ in $\RR^{d+1}$. 
Then, for $t\in \RR^{\times}$, we get the \emph{$t$-deformation}
\begin{equation}
  \label{eq:s-deformation}
  t.\phi_\cA\,:\, (\CC^{\times})^d \to \CP^\cA \ , \quad x \mapsto (t^{\omega(a)}\cdot x^a \mid a \in\cA)
\end{equation}
with respect to $\omega$.
The family $(t.\phi_\cA \mid 0 < t \leq 1 )$ is a toric degeneration of $X_A$.
Observe that we require $\omega$ to attain nonnegative integer values, but this is not an additional restriction on $T$. 

The $t$-deformation induces an action on the monomials of $\RR[x^a \mid a \in \cA]$, so the system \eqref{eq:wronski} is deformed to 
\begin{equation}
  \label{eq:wronski-deformation}
  W_{\kappa,c}(x) \ = \ \sum_{a \in \cA} t^{\omega(a)} c_{\ell(a)}\cdot \kappa_a x^a \enspace .
\end{equation}

As the final piece of notation, let $E_{\omega,\kappa}$ be the linear space of codimension $d+1$ in $\CP^\cA$ defined by the vanishing of the terms multiplying the coefficients $c_i$ of a Wronski polynomial, i.e.,
\begin{equation}
  \label{eq:linear-subspace}
  \Lambda_i(z) \ = \ \sum_{a\in\cA_i} \kappa_a z_a \quad \text{for } 0\leq i \leq d \enspace ,
\end{equation}
where $\cA_i:=\setof{a\in \cA}{\ell(a)=i}$.

The following is a slight generalization of \cite[Theorem 7.13]{Sottile:2011}, where it is required that the triangulation $T$ is additionally unimodular; the stronger statement that we reproduce here is proved in \cite[Theorem~3.5]{SoprunovaSottile:2006}.
\begin{theorem}\label{thm:Soprunova+Sottile}
  Suppose that the set $\cA=P\cap\ZZ^d$ of lattice points in a lattice polytope $P$ is primitive and the spherical toric variety $Y_\cA^+$ is orientable.
  Let $\omega: \cA\to\NN$ be a function inducing a regular, dense and foldable triangulation $T$ with signature $\sigma(T)$, and let $\kappa\in\RR_{>0}^\cA$.
  If there exists a (minimal) $t_0>0$ such that
  \begin{equation}\label{eq:extra}
    t ^{-1}.Y_\cA \cap E_{\omega,\kappa} \ = \ \emptyset
  \end{equation}
  for all  $t \in (0,t_0]$, then a general system of Wronski polynomials \eqref{eq:wronski-deformation} for $\omega$ and $\kappa$ has exactly\/ $\nvol(P)$ complex solutions, at least $\sigma(T)$ of which are real.
\end{theorem}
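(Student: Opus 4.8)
The plan is to read a system of Wronski polynomials as a fibre of the rational \emph{Wronski map} $\Lambda=(\Lambda_0:\dots:\Lambda_d)\colon\CP^\cA\dashrightarrow\CP^d$ given by the linear forms in \eqref{eq:linear-subspace}, whose base locus is exactly $E_{\omega,\kappa}$. Choosing $d$ general parameter vectors $c^{(1)},\dots,c^{(d)}$ amounts to prescribing a general point $p\in\CP^d$, and the solutions in $(\CC^\times)^d$ of the corresponding deformed system \eqref{eq:wronski-deformation} are the points $x$ with $t.\phi_\cA(x)\in\Lambda^{-1}(p)$; the real solutions are the real such points. Thus the two assertions become a statement about the complex degree of $\Lambda$ restricted to the deformed toric variety, and one about the topological degree of the map it induces on a double cover.

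First I would settle the complex count. Since the $t$-deformation \eqref{eq:s-deformation} is precisely the toric degeneration of $X_\cA$ attached to the regular triangulation $T$ induced by $\omega$, a polyhedral-homotopy argument (equivalently, Kushnirenko's theorem together with $\deg X_\cA=\nvol(P)$) shows that for general $c$ and all sufficiently small $t>0$ the system \eqref{eq:wronski-deformation} has exactly $\sum_{\Delta}\nvol(\Delta)=\nvol(P)$ simple solutions in $(\CC^\times)^d$, where $\Delta$ ranges over the maximal cells of $T$: since $T$ is dense, each such $\Delta$ is an empty simplex, each colour class meets $\Delta$ in a single vertex, so the associated subsystem is linear on $\CP^\Delta$ with a single solution there, and $X_\Delta\to\CP^\Delta$ has degree $\nvol(\Delta)$, so $\Delta$ contributes $\nvol(\Delta)$ torus solutions; for generic $c$ none of these escapes to the toric boundary. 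Shrinking $t_0$ if necessary, this holds for all $t\in(0,t_0]$.

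For the lower bound on the number of real solutions I would pass to the spherical picture. As $\cA$ is primitive, $X_\cA$ is normal, hence smooth in codimension one, so $Y_\cA$ is a closed pseudomanifold of dimension $d$, smooth away from a subset of codimension at least two, and its double cover $Y_\cA^+\subset\Sph^\cA$ is orientable by hypothesis; the same holds for the deformed varieties $t^{-1}.Y_\cA$. The homogeneous linear map underlying $\Lambda$ induces $z\mapsto\Lambda(z)/\lVert\Lambda(z)\rVert$, and by the extra condition \eqref{eq:extra} the deformed real toric variety misses $E_{\omega,\kappa}$ for $t\in(0,t_0]$, so this descends to a map $\widetilde\Lambda_t$ from the double cover of $t^{-1}.Y_\cA$ to $\Sph^d$ that is defined on the \emph{entire} compact $d$-dimensional source. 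Hence $\widetilde\Lambda_t$ has a well-defined Brouwer degree, independent of $t$. For a general $p\in\CP^d$ the two lifts $\pm\tilde p\in\Sph^d$ are regular values whose preimages avoid the lower-dimensional boundary strata, and each real torus solution $x$ contributes its two lifts of $\phi_\cA(x)$ to $\widetilde\Lambda_t^{-1}(\{\tilde p,-\tilde p\})$, so that
\[
\#\{\text{real torus solutions}\}\ =\ \tfrac12\,\#\widetilde\Lambda_t^{-1}(\{\tilde p,-\tilde p\})\ \geq\ \lvert\deg\widetilde\Lambda_t\rvert\,.
\]

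It remains to evaluate $\deg\widetilde\Lambda_t$, and this is the heart of the matter. Being a homotopy invariant it can be computed as $t\to0^+$, where the double cover of $t^{-1}.Y_\cA$ degenerates to the union of the spherical toric varieties $Y_\Delta^+$ of the maximal cells of $T$, glued along their coloured facets. Because $T$ is dense and foldable, the $d{+}1$ vertices of each $\Delta$ carry the $d{+}1$ colours bijectively, so after an affine coordinate change the restriction $Y_\Delta^+\to\Sph^d$ is the spherical lift of a monomial map of degree $\nvol(\Delta)$; such a map has topological degree $\pm1$ when $\nvol(\Delta)$ is odd and $0$ when it is even, and the sign is governed by whether $\Delta$ is black or white, the two classes being interchanged by the deck transformation of the double cover. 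Summing the local contributions yields $\deg\widetilde\Lambda_t=\pm\bigl(\#\{\text{black }\Delta:\nvol(\Delta)\text{ odd}\}-\#\{\text{white }\Delta:\nvol(\Delta)\text{ odd}\}\bigr)$, that is $\lvert\deg\widetilde\Lambda_t\rvert=\sigma(T)$; combined with the complex count, a general deformed Wronski system has exactly $\nvol(P)$ complex solutions and at least $\sigma(T)$ real ones. The main obstacle I anticipate is making this limiting degree computation rigorous: understanding the map on the strata that appear in the toric degeneration, verifying the odd/even dichotomy for non-unimodular empty simplices, and tracking orientations so that the local signs assemble into the signature. It is exactly here that the orientability of $Y_\cA^+$ and the non-intersection condition \eqref{eq:extra}, which keeps $\widetilde\Lambda_t$ globally defined, enter the argument.
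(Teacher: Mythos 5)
This theorem is not proved in the paper at all: it is quoted as Theorem~3.5 of Soprunova--Sottile \cite{SoprunovaSottile:2006} (cf.\ Theorem~7.13 in \cite{Sottile:2011}), and your sketch reproduces essentially the same strategy as that original proof --- the Wronski projection with base locus $E_{\omega,\kappa}$, Kushnirenko for the complex count, orientability of the double cover plus condition \eqref{eq:extra} to get a well-defined mapping degree, and the toric degeneration along $T$ with the odd-volume black/white simplices contributing $\pm1$ to give $\sigma(T)$. The step you flag as the remaining obstacle (the limit degree computation on the degenerate fibre and the parity dichotomy for the spherical monomial maps) is precisely the technical core carried out in \cite{SoprunovaSottile:2006}, so your outline is consistent with the cited proof.
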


For a given height function $\omega : \cA\rightarrow \NN$ we consider the polynomials
\begin{equation}
  \label{eq:metasys}
  f_i(x) \ := \ \sum_{a\in \cA_i}t^{\omega(a)}\kappa_ax^a \quad \text{for } 0\leq i \leq d \enspace .
\end{equation}
The condition \eqref{eq:extra} satisfied if the system of equations $f_0= \cdots =f_d=0$ has no real roots for almost all values of $t$.
These are the \enquote{extra conditions} from the introduction.
The statement concerning the number of the complex roots is precisely Kushnirenko's Theorem \cite[Theorem 3.2]{Sottile:2011}.

Next we want to convert the condition on the orientability of $Y_\cA^+$ into polyhedral terms.
To this end we consider the facet description
\begin{equation}
  P \ = \ \bigl\{ x\in\RR^d \,\bigm|\, U x \geq -b \bigr\}
\end{equation}
where $U\in\ZZ^{m\times d}$ and $b\in\ZZ^m$, such that $m$ is minimal, i.e., each row $(u_i,-b_i)$ of the extended matrix $(U | -b)$ defines one facet of $P$, and $u_i$ is primitive.
These requirements make the extended matrix $(U|-b)$ unique up to reordering the rows.
Now the vector $\epsilon_i\in\{\pm 1\}^d$ is defined as follows: its $k$th coordinate is $-1$ raised to the power of the $k$th coordinate of $u_i$.
That vector is extended by one additional coordinate to $\epsilon_i^+=((-1)^{b_i},\epsilon_i)$ in $\{\pm1\}^{1+d}$.
We arrive at the following characterization.
\begin{theorem}[{\cite[Theorem 7.7]{Sottile:2011}}]
  \label{thm:orientable}
  The smooth locus of $Y_\cA^+$ is orientable if and only if there is a basis for $\{\pm1\}^{1+d}$ such that for every $i\in[m]$ the vector $\epsilon_i^+$ is a product of an odd number of basis vectors.
\end{theorem}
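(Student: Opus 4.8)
The plan is to use the standard presentation of the spherical toric variety as a quotient of copies of $P$. Over its smooth locus $M$, the real toric variety $Y_\cA$ is glued together from $2^d$ copies of (the smooth part of) $P$, indexed by the connected components $\{\pm1\}^d$ of the real torus $(\RR^{\times})^d$, with the copies labelled $\epsilon$ and $\epsilon\cdot\epsilon_i$ identified along the relative interior of the facet cut out by the $i$-th row of $(U\mid-b)$; passing to the double cover $Y_\cA^+$ simply doubles the label set to $\{\pm1\}^{1+d}$, so that the copy $P_\eta$ is identified with $P_{\eta\cdot\epsilon_i^+}$ across that facet — the additional coordinate, which distinguishes the two sheets over $\RP^\cA$, picks up the factor $(-1)^{b_i}$ precisely because $b_i$ is the constant term of the $i$-th facet inequality, matching the definition of $\epsilon_i^+$. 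I would first record this model carefully, then observe that each $P_\eta$ is an open piece of the convex body $P$, hence an orientable manifold with corners, so fixing one orientation of $\RR^d$ orients every $P_\eta$ at once. A global orientation of $M$ is then the same thing as a family of signs $s\colon\{\pm1\}^{1+d}\to\{\pm1\}$ such that for every $i\in[m]$ and every $\eta$ the orientations $s(\eta)$ on $P_\eta$ and $s(\eta\epsilon_i^+)$ on $P_{\eta\epsilon_i^+}$ glue to a consistent orientation across the shared facet.

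Next I would carry out the local computation at the $i$-th facet: in toric coordinates the transition map between the charts $P_\eta$ and $P_{\eta\epsilon_i^+}$ is a reflection in the normal direction composed with the coordinate sign changes prescribed by $\epsilon_i$, and comparing its effect on orientations with the elementary rule that two oriented manifolds with boundary glue to an oriented manifold along a common boundary facet exactly when their induced boundary orientations are opposite, one should obtain that consistency across the $i$-th facet amounts to the single relation
\[
  s(\eta\cdot\epsilon_i^+) \ = \ -\,s(\eta) \qquad\text{for all }\eta\in\{\pm1\}^{1+d}\text{ and all }i\in[m].
\]
I expect this to be the main obstacle: pinning down the orientation sign of the toric transition map, and in the same breath identifying $M$ with the glued model away from the faces where $X_\cA$ fails to be smooth. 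Everything after this is formal. (A more cohomological route, computing the first Stiefel–Whitney class of $Y_\cA^+$ from the same facet data, would amount to the same bookkeeping.)

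Finally I would translate solvability of the sign system into the basis condition. Writing $s=(-1)^{\sigma}$ and identifying $\{\pm1\}^{1+d}$ with $(\ZZ/2)^{1+d}$, so that each $\epsilon_i^+$ becomes a vector $v_i$, the relation reads $\sigma(\eta+v_i)=\sigma(\eta)+1$; telescoping along any relation $\sum_{i\in R}v_i=0$ forces $|R|$ to be even, and conversely this parity condition lets one build $\sigma$ coset by coset, so the system is solvable if and only if every $\ZZ/2$-relation among the $v_i$ has even length. By Fredholm duality over $\ZZ/2$ — a subset $R\subseteq[m]$ with $\sum_{i\in R}v_i=0$ but $|R|$ odd is exactly an inconsistency certificate for the affine system $\langle w,v_i\rangle=1$, $i\in[m]$ — this is in turn equivalent to the existence of $w\in(\ZZ/2)^{1+d}$ with $\langle w,v_i\rangle=1$ for all $i$. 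To close, I would note that such a $w$ is necessarily nonzero, so one can extend a basis of $w^{\perp}$ by a vector outside $w^{\perp}$ to a basis of $(\ZZ/2)^{1+d}$ on which $\langle w,-\rangle$ is identically $1$; then for any $\eta$ the scalar $\langle w,\eta\rangle$ equals the number of these basis vectors occurring in the expansion of $\eta$, modulo $2$, so $\langle w,v_i\rangle=1$ says precisely that $\epsilon_i^+$ is a product of an odd number of basis vectors, and conversely the length-mod-$2$ functional attached to any basis has the form $\langle w,-\rangle$. This yields the claimed equivalence.
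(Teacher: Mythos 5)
A preliminary remark: the paper itself does not prove Theorem~\ref{thm:orientable}; it is quoted from Sottile's monograph, so there is no in-paper argument to compare with. Your outline is, in substance, the standard proof behind that citation: glue the spherical toric variety from $2^{1+d}$ copies of $P$ indexed by $\{\pm1\}^{1+d}$, with the copies $\eta$ and $\eta\cdot\epsilon_i^+$ identified along facet $i$, reduce orientability of the smooth locus to solvability of the sign system $s(\eta\cdot\epsilon_i^+)=-s(\eta)$, and then do linear algebra over $\mathbb{Z}/2$ to get the basis condition. The combinatorial endgame is fine (well-definedness on cosets of the subgroup generated by the $\epsilon_i^+$, then a free choice per coset), so the route is the right one.

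Two points still need real work. First, the step you yourself flag as the main obstacle is the heart of the theorem and is only asserted: you must show the compatibility sign across \emph{every} facet is $-1$, independent of $i$ and $\eta$. To close it, pass to the cone over $\{1\}\times P$ in $\RR^{1+d}$, whose facet normals are $(b_i,u_i)$ -- this is also the clean reason the extra coordinate contributes $(-1)^{b_i}$, rather than ``because $b_i$ is the constant term''. Then observe that $\epsilon_i^+$ is the value at $-1$ of the one-parameter subgroup of the facet normal, hence acts trivially on the corresponding codimension-one orbit, so the two adjacent copies are glued along $F_i$ by the \emph{identity}; and any monomial $x^m$ with $\langle m,(b_i,u_i)\rangle=1$ serves as a local transverse coordinate whose sign differs on the two adjacent components, so the copies approach the divisor from opposite sides. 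With ``identity on the facet, opposite sides transversally'', your boundary-orientation lemma gives the uniform flip. You should also record why facet interiors suffice: $X_\cA$ is normal, hence smooth in codimension one, and deleting the codimension $\geq 2$ orbits from the smooth locus does not affect orientability; primitivity of $\cA$ guarantees the $2^{1+d}$ copies are genuinely distinct. Second, a small but genuine slip at the end: a basis of $w^{\perp}$ extended by one vector $g_0\notin w^{\perp}$ does \emph{not} satisfy $\langle w,-\rangle\equiv 1$ on the basis (it is $0$ on the $w^{\perp}$ part); take instead the basis $g_0,\,g_0+g_1,\dots,g_0+g_d$, on which $\langle w,-\rangle$ is identically $1$, and the parity argument goes through. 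With these repairs your proposal becomes a complete proof along the same lines as the cited source.
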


The following example already occurs as \cites[Example 3.3]{SoprunovaSottile:2006}[Example 7.10]{Sottile:2011}.
\begin{example}\label{exmp:hexagon}
  Let $\cA$ be the configuration of the seven lattice points \[ (0,0),\ (1,0),\ (0,1),\ (1,1),\ (1,2),\ (2,1),\ (2,2) \] in $\RR^2$.
  So $P=\conv(\cA)$ is a lattice hexagon.
  The height vector $\omega = (3, 1, 1, 0, 1, 1, 3)$ induces a foldable regular triangulation $T$ with signature 2, as seen in the figure below. 
  The vertices are labeled $0,1,2$, and this yields a proper coloring of the graph of $T$.
  
  \begin{center}
    \definecolor{col0}{rgb}{1.0,0.0,0.0}
    \definecolor{col1}{rgb}{0.0,1.0,0.0}
    \definecolor{col2}{rgb}{0.0,0.0,1.0}
    \definecolor{dark}{rgb}{0.4,0.4,0.4}
    \begin{tikzpicture}
      \draw[fill=dark] (1,0) -- (2,1) -- (1,1) -- cycle;
      \draw[fill=dark] (1,1) -- (1,2) -- (0,1) -- cycle;
      \draw[fill=dark] (0,0) -- (1,0) -- (0,1) -- cycle;
      \draw[fill=dark] (1,2) -- (2,2) -- (2,1) -- cycle;
      
      \node[circle, draw=black, fill=white, inner sep=0pt,minimum size=0.7em] at (0,0) {\tiny $0$};
      
      \node[circle, draw=black, fill=white, inner sep=0pt,minimum size=0.7em] at (1,1)  {\tiny $0$};
      
      \node[circle, draw=black, fill=white, inner sep=0pt,minimum size=0.7em] at (1,0)  {\tiny $1$};
      
      \node[circle, draw=black, fill=white, inner sep=0pt,minimum size=0.7em] at (0,1)  {\tiny $2$};
      
      \node[circle, draw=black, fill=white, inner sep=0pt,minimum size=0.7em] at (2,2) {\tiny $0$};
      
      \node[circle, draw=black, fill=white, inner sep=0pt,minimum size=0.7em] at (2,1)  {\tiny $2$};
      
      \node[circle, draw=black, fill=white, inner sep=0pt,minimum size=0.7em] at (1,2)  {\tiny $1$};

    \end{tikzpicture}  
  \end{center}
  For parameters $c=(c_0,c_1,c_2)$ a Wronski polynomial $W_{\kappa,c}$ is now given by 
  \begin{equation}
    \label{wronski_polynomials}
    c_0(t^3 + x_1x_2 + t^3x_1^2x_2^2) + c_1t(x_1+ x_1x_2^2) + c_2t(x_2+x_1^2x_2) \enspace.
  \end{equation}
  The normalized volume of $P$ equals six, whence a system of two polynomials of the form \eqref{wronski_polynomials} with generic choices for $t, a_0, a_1, a_2 \in \CC$ will have six complex solutions.

  The orientability of $Y_\cA^+$ has been verified via Theorem~\ref{thm:orientable} in \cite[Example 7.8]{Sottile:2011}.
  To check if Theorem~\ref{thm:Soprunova+Sottile} applies, we consider the polynomial system \eqref{eq:metasys} in the
  three variables $t, x_1, x_2$ which reads:
  \begin{equation}
    \label{coefficient_system}
    \begin{cases}
      t^3 + x_1x_2 + t^3x_1^2x_2^2 = 0 \\
      t(x_1+ x_1x_2^2) = 0 \\
      t(x_2+x_1^2x_2) = 0
    \end{cases}
  \end{equation}
  It can be checked that there is no solution to the above system with $t \neq 0$.
  Consequently, $t^{-1}.Y_\cA \cap E_{\omega,\kappa}$ is empty for all nonzero values of $t$.
  So, by Theorem~\ref{thm:Soprunova+Sottile} any Wronski system \eqref{wronski_polynomials} with $c_i \in \RR\setminus\{0\}$ has at least two real solutions.
  
  Using the implementation of the symbolic library \msolve for \OSCAR \cite{oscar-book}, we computed the number of real solutions of 10,000 random instances of such a system. To be more precise, we chose $t$ to be uniformly distributed in $(-1,1)$, and the $a_i$'s to be uniformly distributed in $(-50,50)$. Of these systems, $79\%$ had 2 real solutions and the remaining $21\%$ had 6. None of the systems had 4 real solutions.
  Similar observations have been made in the experiments reported as \cite[Example 3.6]{SoprunovaSottile:2006}.
\end{example}

It turns out that, in the context of Theorem \ref{thm:Soprunova+Sottile}, a suitable height function always exists which
satisfies the condition \eqref{eq:extra} at least in the torus.

\begin{proposition}\label{prop:resultant}
  Let $T$ be a regular, dense and foldable triangulation of the lattice $d$-polytope $P\subset\RR^d$; we set $\cA=P\cap\ZZ^d$.
  Then there exists a height function $\omega: \cA \rightarrow \NN$ inducing $T$ such that the system
  \begin{equation}
    \label{eq:tosolve}
    f_0=\dots = f_d = 0
  \end{equation}
  with $f_i$ defined as in \eqref{eq:metasys} has no solutions in the torus $(\CC^{*})^d$ for all except finitely many values of $t$.
\end{proposition}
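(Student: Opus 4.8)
The plan is to prove the (formally stronger) statement that \emph{every} height function $\omega\colon\cA\to\NN$ inducing $T$ already has the asserted property; existence of such an $\omega$ is then clear, because the heights inducing $T$ form the interior of a full-dimensional rational polyhedral cone in $\RR^\cA$ (this is where regularity of $T$ enters), so one may pick a rational point in it, clear denominators by scaling, and add a large integer constant — scaling by a positive factor and adding a constant leave the induced triangulation unchanged — to land in $\NN^\cA$.

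So fix such an $\omega$ and $\kappa\in\RR_{>0}^\cA$, and suppose for contradiction that the system \eqref{eq:tosolve} has a solution in $(\CC^\ast)^d$ for infinitely many $t$. The incidence variety $Z=\setof{(t,x)\in\CC^\ast\times(\CC^\ast)^d}{f_0(t,x)=\dots=f_d(t,x)=0}$ then projects onto an infinite, hence cofinite, hence dense constructible subset of the $t$-line, so some irreducible component of $Z$ dominates it. Localising at the generic point of that component and passing to the field $K=\overline{\CC\{\{t\}\}}$ of Puiseux series with its $t$-adic valuation $\nu$, $\nu(t)=1$, yields a point $x\in(K^\ast)^d$ with $f_i(x)=0$ for all $i$. (Equivalently, one may run the next step through the sparse resultant $\operatorname{Res}_{\cA_0,\dots,\cA_d}$ of the family \eqref{eq:metasys}, specialised via $z_a\mapsto t^{\omega(a)}\kappa_a$: the claim becomes that this polynomial in $t$ does not vanish identically, and the finitely many exceptional values of $t$ are among its roots.)

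Set $q=(\nu(x_1),\dots,\nu(x_d))\in\RR^d$. The monomial $t^{\omega(a)}\kappa_a x^a$ of $f_i(x)$ has valuation $\omega(a)+\langle a,q\rangle$, and since $\kappa_a x^a\neq0$ the value $\mu_i:=\min_{a\in\cA_i}\bigl(\omega(a)+\langle a,q\rangle\bigr)$ must be attained by at least two indices $a\in\cA_i$ — otherwise $\nu(f_i(x))=\mu_i<\infty$, contradicting $f_i(x)=0$. This holds for every colour $i\in\{0,\dots,d\}$; in particular no $\cA_i$ is a singleton. Now let $\mu:=\min_{a\in\cA}\bigl(\omega(a)+\langle a,q\rangle\bigr)=\min_i\mu_i$ and $M:=\setof{a\in\cA}{\omega(a)+\langle a,q\rangle=\mu}$. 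Since $\omega$ induces $T$, the set $M$ is the vertex set of the cell $\sigma$ of $T$ selected by the direction $q$ (the face of $T$ onto which the $q$-minimal lower face of $\{(a,\omega(a)):a\in\cA\}$ projects); since $T$ is dense, $\sigma\cap\cA=\operatorname{vert}(\sigma)=M$; and since $T$ is foldable with colouring $\ell$, the vertices of the simplex $\sigma$ have pairwise distinct colours, so $|M\cap\cA_i|\le1$ for every $i$. Choosing $i_0$ with $\mu_{i_0}=\mu$, the set of $a\in\cA_{i_0}$ attaining $\mu_{i_0}$ equals $M\cap\cA_{i_0}$, which has at most one element — contradicting that it has at least two. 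Hence $Z$ has no component dominating the $t$-line, which is the claim.

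The only genuinely delicate step is the first reduction, from ``a torus solution for infinitely many complex $t$'' to ``a torus solution over the Puiseux field''; this is a routine application of Chevalley's theorem on images of constructible sets, and everything after it is just the combinatorics of foldable triangulations — the key observation being that a lower face of a \emph{dense, foldable} regular triangulation meets every colour class in at most one point, whereas a common root in the torus would force some colour class to meet the minimising face at least twice. (Carrying out the argument through the sparse resultant instead would additionally require knowing that this resultant is non-trivial, i.e.\ that the support family $\cA_0,\dots,\cA_d$ is essential — an extra point that the valuation-theoretic formulation avoids altogether.)
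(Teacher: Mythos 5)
Your argument is correct, but it takes a genuinely different route from the paper, and it proves a formally stronger statement. The paper's proof goes through sparse resultants: it forms the $(\cA_0',\dots,\cA_d')$-resultant, specializes the coefficients of the lattice points outside the color classes to zero, invokes a nontriviality result of Bender et al.\ (which applies because each $\cA_i$ contains the vertices of $\conv(\cA_i)$ -- exactly the ``essential family'' issue you flag in your parenthetical), and then chooses $\omega$ \emph{generically} in the secondary cone so that the linear functional $\omega^*$ is injective on the exponent vectors of $R$; this prevents cancellation under $\gamma_a\gets t^{\omega(a)}$ and makes the specialized resultant a nonzero polynomial in $t$, whose roots contain the exceptional $t$. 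You instead run a valuation-theoretic (Kapranov-type, ``easy direction'' of tropicalization) argument over the Puiseux field, reduced to that setting by Chevalley: a torus solution forces, for each color $i$, the minimum of $\omega(a)+\langle a,q\rangle$ over $\cA_i$ to be attained twice, while regularity of $\omega$ plus density and foldability of $T$ force the global minimizer set $M$ to be (the vertex set of) a simplex of $T$, hence rainbow-colored, hence meeting each $\cA_i$ at most once -- a contradiction at any color attaining the global minimum. What your route buys: it is self-contained (no resultant nontriviality input), and it shows that \emph{every} $\omega$ inducing $T$ works, not just a generic one -- which also explains a posteriori why the concrete height functions $\rho$ and $\mu$ used in Section~4 can only have torus solutions over finitely many $t$ (the positive-dimensional loci reported by \msolve for $\mu$ must lie in coordinate hyperplanes or over finitely many $t$-values). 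What the paper's route buys: an explicit univariate polynomial in $t$ controlling the exceptional values, which matches the eliminant computations later, and a construction that the paper reuses verbatim on the boundary strata in Theorem~\ref{thm:lifting-exists} (though your argument adapts stratum by stratum just as well). The only step you leave slightly informal is the passage from a dominating component of the incidence variety to a point over $K=\overline{\CC\{\{t\}\}}$; taking a closed point of the generic fiber, whose residue field is finite over $\CC(t)$ and hence embeds into $\overline{\CC(t)}\subset K$, closes it, so this is indeed routine as you claim.
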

\begin{proof}
  Let $P_i$ be the convex hull of $\cA_i$ for $i=0,\dots,d$. Denote $\cA_i':=P_i\cap \ZZ^d$ and consider the sparse
  $(\cA_0',\dots,\cA_d')$-resultant
  \[R_1\in \ZZ[\gamma_{i,a}\;|\;i=0,\dots,d; a\in \cA_i\cap \ZZ^d] \enspace ;\]
  see \cite[see Chapter 8, Proposition-Definition 1.1]{gelfand1994} for a definition.
  Let $R$ be $R_1$ under the substitution $\gamma_{i,a}\gets 0$ for $a\in \cA_i'\setminus \cA_i$.
  Since each $\cA_i$ contains the set of vertices of each $P_i$, the polynomial $R$ is nonzero, by \cite[Theorem 1.17]{bender2024}.
  Because $\cA_i\cap \cA_j=\emptyset$ for $i\neq j$ we may consider $R$ as a polynomial in $\ZZ[\gamma_a\;|\; a\in \cA\cap \ZZ^d]$.

  Next, we have to show that there exists a height function $\omega:\cA\rightarrow \NN$ such that $R$ does not vanish under the substitution
  $\gamma_a\gets t^{\omega(a)}$. Let $M:=\ZZ^q$ where $q$ is the cardinality of $\cA\cap \ZZ^d$, we index the standard basis of
  $M$ by $\cA\cap \ZZ^d$. Each choice of $\omega$ induces a linear functional $\omega^{*}$ on $M$ by mapping the unit vector indexed
  by $a$ to $\omega(a)$, $\omega$ induces the triangulation $T$ if and only if $\omega^{*}$ lies in the secondary cone of
  $T$.  This cone is full-dimensional, thus there exists a choice of $\omega$ such that $\omega^{*}$ is injective when restricted to
  the exponent vectors of $R$, in particular $R$ does not vanish under the substitution $\gamma_a\gets t^{\omega(a)}$. Using the
  property of $R$ that a mixed system with supports $\cA_i$ has no solutions in $(\CC^{*})^d$ when the
  $R$ does not vanish, this shows that \eqref{eq:tosolve} has no solutions in $(\CC^{*})^d$.
\end{proof}

If the polytope $P$ and the triangulation $T$ in the setting of Proposition \ref{prop:resultant} satisfy a special local smoothness condition at the origin, then the conclusion of Proposition \ref{prop:resultant} can be extended to all of $\CC^d$.
\begin{theorem}\label{thm:lifting-exists}
  In the setting of Proposition \ref{prop:resultant} we additionally assume that $P\subset \RR_{\geq 0}^d$ is contained in the positive orthant
  and, moroever, the $d$-dimensional standard simplex $\Delta_d$ is a maximal cell of the triangulation $T$.
  Then the system \eqref{eq:tosolve} has no solutions in the entire space $\CC^d$ for almost all except finitely many values of $t$.
\end{theorem}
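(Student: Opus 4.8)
The plan is this. A solution of the system \eqref{eq:tosolve} lying on a union of coordinate hyperplanes restricts, on its support, to a solution in the torus of the non-vanishing coordinates, and this restriction satisfies the analogue of \eqref{eq:tosolve} attached to the face of $P$ cut out by the vanishing coordinates. On such a face the resulting system is \emph{over-determined}: it has strictly more equations of nonempty support than torus variables, hence it is generically inconsistent. So it suffices to choose a single height function $\omega$ that is generic enough for $P$ and for all these faces simultaneously. No induction on $d$ is needed --- the full-torus statement of Proposition~\ref{prop:resultant} reappears below as the special case $J=\emptyset$.

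First I would fix notation. Since $\Delta_d$ is a cell of $T$, its vertices $0,e_1,\dots,e_d$ are pairwise adjacent in the graph of $T$ and so pairwise differently coloured; relabel the colours so that $\ell(0)=0$ and $\ell(e_k)=k$. For $J\subseteq[d]$ set $F_J:=P\cap\bigcap_{j\in J}\{x_j=0\}$. As $P\subseteq\RR_{\ge0}^d$, each hyperplane $\{x_j=0\}$ supports $P$, so $F_J$ is a face of $P$; since $\Delta_d\subseteq P$, it contains $0$ and every $e_i$ with $i\notin J$, hence $\dim F_J=|[d]\setminus J|$ and $\cA\cap F_J=\{a\in\cA:\operatorname{supp}(a)\subseteq[d]\setminus J\}$. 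In particular all $\dim F_J+1$ colours of $\{0\}\cup([d]\setminus J)$ already occur on $\cA\cap F_J$. Now fix $J$, write $S:=[d]\setminus J$, and consider the parametric system $\sum_{a\in\cA_i\cap F_J}\gamma_a x^a=0$, $0\le i\le d$, in the torus variables $x\in(\CC^{\times})^S$ with parameters $\gamma=(\gamma_a)_{a\in\cA\cap F_J}$. Over each fixed torus point, the set of admissible $\gamma$ is a linear subspace of parameter space $\CC^{\cA\cap F_J}$ of codimension equal to the number of colours occurring on $F_J$ --- each such colour contributes one nontrivial linear equation in its own, disjoint, block of $\gamma$'s. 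Hence the incidence variety has dimension at most $|S|+|\cA\cap F_J|-(\dim F_J+1)=|\cA\cap F_J|-1$, strictly below $\dim\CC^{\cA\cap F_J}$, so the Zariski closure of its image in parameter space lies in the vanishing locus of some nonzero $\Phi_J\in\CC[\gamma_a:a\in\cA\cap F_J]$. Because rescaling the block of $\gamma$'s of a single colour merely rescales one equation, this vanishing locus is stable under the torus of colour-block rescalings, so $\Phi_J$ may be taken multihomogeneous in the colour blocks.

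Next I would choose $\omega$. Identifying a height function with the linear functional $\omega^{*}$ on $\RR^{\cA}$ sending the basis vector indexed by $a$ to $\omega(a)$, the requirement that $\omega$ induce $T$ is exactly that $\omega^{*}$ lie in the secondary cone of $T$, which is full-dimensional and rational. For each of the finitely many $J$, the requirement that $\omega^{*}$, restricted to the coordinates indexed by $\cA\cap F_J$, be injective on the (finite) exponent set of $\Phi_J$ fails only on finitely many rational hyperplanes of $\RR^{\cA}$. A full-dimensional cone is not covered by finitely many hyperplanes, so some $\omega^{*}$ in the secondary cone meets all these requirements; clearing denominators and then adding a suitable constant to $\omega$ --- which changes neither the induced triangulation nor, by multihomogeneity of each $\Phi_J$, the injectivity requirements --- we may take $\omega\colon\cA\to\NN$. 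To conclude, let $x\in\CC^d$ satisfy $f_0(x)=\dots=f_d(x)=0$, and put $S:=\operatorname{supp}(x)$, $J:=[d]\setminus S$. Since a monomial $x^a$ is nonzero at $x$ only when $a\in F_J$, the point $x|_S\in(\CC^{\times})^S$ solves the face system above with $\gamma_a=\kappa_a t^{\omega(a)}$, which forces $\Phi_J\big((\kappa_a t^{\omega(a)})_a\big)=0$. But, expanding, $\Phi_J\big((\kappa_a t^{\omega(a)})_a\big)$ is a sum of the powers $t^{\langle\omega^{*},e\rangle}$, $e$ running over the exponent set of $\Phi_J$, with coefficients that are nonzero (a nonzero coefficient of $\Phi_J$ times a positive product of powers of the $\kappa_a$); by the choice of $\omega$ these powers of $t$ are pairwise distinct, so the sum is a nonzero polynomial in $t$ and vanishes for only finitely many $t$. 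Taking the union of these finite bad sets over the finitely many $J$ proves the theorem.

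I expect the main obstacle to be conceptual economy rather than difficulty: isolating the over-determinedness step and seeing that the two hypotheses are precisely what it needs. The assumption $P\subseteq\RR_{\ge0}^d$ is what turns each coordinate slice $F_J$ into a genuine face, and the assumption that $\Delta_d$ is a cell is what pins down $\dim F_J=|[d]\setminus J|$ and the presence of at least $\dim F_J+1$ colours on $F_J$ --- equivalently, it is what makes each slice system over-determined and not under-determined, an under-determined slice system being exactly what could carry an unwanted torus solution. The only genuinely technical point is the passage from a rational $\omega^{*}$ to an $\NN$-valued $\omega$, for which the multihomogeneity of the $\Phi_J$ is used, and that multihomogeneity is immediate from the colour-block rescaling symmetry of the bad locus.
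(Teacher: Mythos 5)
Your proposal is correct, and it follows the same overall decomposition as the paper: stratify $\CC^d$ by supports into coordinate subtori, observe that a solution with zero set $J$ gives a torus solution of the system restricted to the face $F_J=P\cap\bigcap_{j\in J}\{x_j=0\}$ (this is where $P\subseteq\RR_{\geq0}^d$ enters), use the cell $\Delta_d$ to see that at least $\dim F_J+1$ colours occur on $F_J$ so that the face system is overdetermined, and then pick one height function in the full-dimensional secondary cone whose associated linear functional separates the exponents of all the finitely many obstruction polynomials, so that each specialization $\gamma_a\gets\kappa_a t^{\omega(a)}$ is a nonzero polynomial in $t$. Where you genuinely diverge is in the key lemma producing the obstruction polynomial $\Phi_J$: the paper obtains it as a sparse resultant in the sense of \cite{gelfand1994}, specialized by setting the coefficients outside the given supports to zero, and must invoke \cite[Theorem~1.17]{bender2024} (together with the observation that the vertices of $F\cap P_j$ are vertices of $P_j$) to guarantee the specialization is nonzero; you instead work directly with the given supports and get a nonzero $\Phi_J$ from an elementary incidence-variety dimension count, using only that there are more nontrivial equations than torus variables, so that the projection to coefficient space cannot be dominant. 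Your route is self-contained (it subsumes Proposition~\ref{prop:resultant} as the case $J=\emptyset$) and dispenses with resultant theory and the vertex-containment check, at the cost of losing the identification of the obstruction with a concrete, named resultant. You are also more careful than the paper on one technical point it leaves implicit, namely normalizing $\omega$ to nonnegative integer values: your use of the multihomogeneity of $\Phi_J$ under colour-block rescaling to show that adding a constant preserves the separation conditions is a valid fix (alternatively one can simply choose the generic point inside the intersection of the open secondary cone with the positive orthant, which is nonempty because the all-ones vector lies in the lineality space, and then clear denominators).
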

\begin{proof}
  The assumptions on $P$ and $T$ combined force that the origin is a vertex of $P$, and that the $d$ coordinate hyperplanes $H_i=\setof{x\in\RR^d}{x_i=0}$ define the facets of $P$ locally at the vertex $0$.
  In particular, $0$ is a simple vertex, i.e., the vertex figure of $P$ at $0$ is a $(d{-}1)$-dimensional simplex.
  
  Let $I\subseteq\{1,2,\dots,d\}$ be an arbitrary set of coordinate directions.  This defines a face
  $F=P\cap\bigcap_{i\in I}H_i$ of dimension is $d-k$ of $P$, where $k=\#I$.  Since $\Delta_d$ is a cell of $T$, the face
  $F$ of $P$ contains the face $G=\Delta_d\cap\bigcap_{i\in I}H_i$ of $\Delta_d$.  Let $J\subseteq\{0,1,\dots,d\}$ be the set of colors of the
  lattice points on the face $G$.  By construction we then have $\#J=d-k+1$.

  The above procedure defines a new system of $d-k+1$ polynomial equations
  \begin{equation}\label{eq:local-meta}
    (f_j)^I=0 \quad \text{for all } j\in J \enspace,
  \end{equation}
  in $d-k$ variables, here $(f_j)^I$ is the substitution of $f_j$ by $x_i\gets 0$ for all $i\in I$.
  The polynomials $(f_j)^I$ are nonzero, because for each color $j\in J$ there is at least one lattice point in $F$ of color $j$.

  For $j\in J$ let $P_j=\conv(\cA_j)$ be the convex hull of the lattice points of color $j$.
  In general, $P_j$ is neither a face nor a subpolytope of $P$.
  However, the set $F\cap P_j$ forms a nonempty face of $P_j$, of some unknown dimension.
  We infer that each vertex of $F\cap P_j$ is also a vertex of $P_j$.

  So we can construct a nonzero polynomial $R_I\in\ZZ[\gamma_a|a\in\cA\cap F]$, as in the proof of Theorem \ref{thm:lifting-exists}.
  Again $R_I$ has the property that a system with the same support as \eqref{eq:local-meta} has no zero in the subtorus associated to the complement of $I$.
  We may now find, as in the proof of Theorem \ref{thm:lifting-exists}, a height function $\omega$ such that the linear function $\omega^*$ is injective on the union of all supports of $R_I$ with varying $I$.
  Hence the system \eqref{eq:tosolve} has no solutions in the union
  \[\CC^d = \bigcup_{I\subset \{1,\dots,d\}} \prod_{i\notin I} \CC^* \times \prod_{i\in I}\{0\} \]
  of subtori for almost all values of $t$.
  This completes our proof.
\end{proof}

\section{Honeycomb triangles and Wronski curves with full support} \label{sec:alcoved}
To set the context, in out next step we briefly introduce a class of polytopes which occurs frequently in algebraic combinatorics and elsewhere.
This information will then be used to specialize to the planar setting, in order to study Wronski curves of a given degree with full support.

\subsection*{Alcoved polytopes.}
For the general context, we start out with a construction studied by Lam and Postnikov \cite{Lam+Postnikov:2007}.
The affine Coxeter arrangement of type $A_{n-1}$ is the infinite arrangement of hyperplanes in $\RR^{n-1}$ given by
\begin{equation}
  \label{eq:arrangement:A}
  H_{ij}^\ell \ = \ \bigl\{ (z_1 , \ldots , z_{n-1}) \in\RR^{n-1} \bigm| z_i-z_j =\ell \bigr\} \enspace ,
\end{equation}
where $0\leq i < j \leq n-1$, $\ell\in\ZZ$, and $z_0=0$.
An \emph{alcoved polytope} is a polytope, $P$, which admits an inequality description in terms of finitely many hyperplanes $H_{ij}^\ell$.
Equivalently, there are integer vectors $b,c$ of length $\tbinom{n}{2}$, with $b_{ij}\leq c_{ij}$, such that
\begin{equation}
  \label{eq:alcoved}
  P \ = \ P(b,c) \ = \ \bigl\{ (z_1 , \ldots , z_{n-1}) \in\RR^{n-1} \bigm| b_{ij} \leq z_i-z_j \leq c_{ij} \bigr\} \enspace .
\end{equation}
The hyperplane arrangement \eqref{eq:arrangement:A} induces a triangulation of $\RR^{n-1}$ into unimodular simplices, i.e., simplices of normalized volume one; such a triangulation is called \emph{unimodular}.
Restricting that triangulation to the alcoved polytope $P$ gives a regular, dense and foldable triangulation of the point configuration $\cA=P\cap\ZZ^{n-1}$.
That triangulation is the \emph{alcoved triangulation} of $\cA$.
One possible height function valid for the entire alcoved triangulation of $\ZZ^{n-1}$ is, e.g.,
\begin{equation}\label{eq:lifting}
  \omega(z_1, \dots, z_{n-1}) \ = \ \sum_{i=0}^{n-1} z_i^2 + \sum_{\substack{i,j \in \{1, \dots, n-1 \} \\ i<j}}^{n-1} (z_i-z_j)^2 \enspace ,
\end{equation}
which occurs, e.g., in \cite{sinn2021}.

\begin{example}
  The hexagon from Example~\ref{exmp:hexagon} is a first example of alcoved polytope, albeit with a different triangulation than the one given in Example \ref{exmp:hexagon}.
  Its defining hyperplanes are $H_{01}^{-2}, H_{02}^{-2}, H_{12}^{-1}, H_{01}^0, H_{02}^0, H_{12}^{1}$.
  So that hexagon is equal to $P(b,c)$ where $b = (-2, -2,-1)$ and $c = (0,0,1)$. The height function in \ref{eq:lifting}, restricted to the hexagon, gives the height vector $(0,1,1,2,0,0,1)$. 

  \begin{center}
    \definecolor{col0}{rgb}{1.0,0.0,0.0}
    \definecolor{col1}{rgb}{0.0,1.0,0.0}
    \definecolor{col2}{rgb}{0.0,0.0,1.0}
    \definecolor{dark}{rgb}{0.4,0.4,0.4}
    \begin{tikzpicture}
      \draw[fill=dark] (0,0) -- (1,0) -- (1,1) -- cycle;
      \draw[fill=dark] (1,1) -- (2,1) -- (2,2) -- cycle;
      \draw[fill=dark] (0,1) -- (1,1) -- (1,2) -- cycle;
      \draw (1,0) -- (2,1);
      \draw (0,0) -- (0,1);
      \draw (1,2) -- (2,2);
      
      	\node[circle, draw=black, fill=white, inner sep=0pt,minimum size=0.7em] at (0,0) {\tiny $0$};

\node[circle, draw=black, fill=white, inner sep=0pt,minimum size=0.7em] at (1,1)  {\tiny $2$};

\node[circle, draw=black, fill=white, inner sep=0pt,minimum size=0.7em] at (1,0)  {\tiny $1$};

\node[circle, draw=black, fill=white, inner sep=0pt,minimum size=0.7em] at (0,1)  {\tiny $1$};

\node[circle, draw=black, fill=white, inner sep=0pt,minimum size=0.7em] at (2,2) {\tiny $1$};

\node[circle, draw=black, fill=white, inner sep=0pt,minimum size=0.7em] at (2,1)  {\tiny $0$};

\node[circle, draw=black, fill=white, inner sep=0pt,minimum size=0.7em] at (1,2)  {\tiny $0$};

    \end{tikzpicture}  
  \end{center}

\end{example}

Hypersimplices form an entire family of examples; see \cite[Eq.~(1)]{Lam+Postnikov:2007}.
The order polytopes studied in \cite[\S8.1]{Sottile:2011} are alcoved, up to a linear transformation \cite[\S7.2]{Lam+Postnikov:2007}.

\subsection*{Honeycomb triangles.}
The standard simplex $\Delta_d=\conv\{0,e_1,e_2,\dots,e_d\}$ in $\RR^d$ is never an alcoved polytope in the sense of \eqref{eq:alcoved}.
The reason is that the facet defining inequality $x_1+x_2+\dots+x_d\leq 1$ does not belong to the Coxeter hyperplane arrangement \eqref{eq:arrangement:A}.
However, in the plane this can be remedied in the following way.

The triangle $\Delta_2'=\conv\{0,e_1,e_1+e_2\}$ is alcoved, and the unimodular transformation
\begin{equation}\label{eq:tau}
  \tau \ = \ \begin{pmatrix} 1 & -1 \\ 0 & 1\end{pmatrix} \quad \in \, \SL(2,\ZZ)
\end{equation}
satisfies $\Delta_2=\tau\cdot\Delta_2'$.
So, in this sense, we could view the dilated standard triangle $\delta\Delta_2=\conv\{(0,0),(\delta,0),(0,\delta)\}$ as an alcoved polygon.
Yet, since coordinates matter here we want to distinguish the two.
Therefore, as in \cite{BJMS:2015}, we call the image of the alcoved triangulation of $\delta\Delta_2'$ under $\tau$ the \emph{honeycomb triangulation} of $\delta\Delta_2$.
The lattice points in $\delta\Delta_2$ correspond bijectively to the bivariate monomials of degree at most $\delta$.
In other words, the honeycomb triangle $\delta\Delta_2$ is the Newton polygon of a bivariate polynomial of degree $\delta$ with full support.
The case $\delta=3$ occurs as \cite[Example~5.1]{SoprunovaSottile:2006}; see Figure~\ref{fig:delta=3}.

With any height function satisfying the conditions of Proposition \ref{prop:liftfunc}, a Wronski polynomial for
$\delta\Delta_2$ is the following:
\begin{equation}
	\label{eq:wronski-triangle}
	c_0(\sum_{(i,j) \in \cA_0} t^{\omega(i,j)} x^iy^j) + c_1(\sum_{(i,j) \in \cA_1} t^{\omega(i,j)} x^iy^j) + c_2(\sum_{(i,j) \in \cA_2} t^{\omega(i,j)} x^iy^j)
\end{equation}
where the $c_k$, $k=0,1,2$, correspond to the colors and $\cA_i$ are the sets of lattice points of the corresponding colors, defined by 
\[\cA_k=\{(i,j) \in \delta\Delta_2 \mid i-j \equiv k \bmod 3\};\; k = 0,1,2\]
as seen in Figure~\ref{fig:delta=3}.

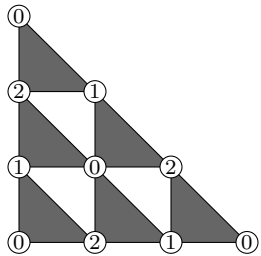
\begin{figure}[th]
  \centering
  \definecolor{dark}{rgb}{0.4,0.4,0.4}
  \begin{tikzpicture}
    \draw[fill=dark] (1,1) -- (0,1) -- (0,2) -- cycle;
    \draw[fill=dark] (1,1) -- (1,0) -- (2,0) -- cycle;
    \draw[fill=dark] (0,0) -- (1,0) -- (0,1) -- cycle;
    \draw[fill=dark] (1,1) -- (1,2) -- (2,1) -- cycle;
    \draw[fill=dark] (2,0) -- (2,1) -- (3,0) -- cycle;
    \draw[fill=dark] (0,2) -- (1,2) -- (0,3) -- cycle;
    
    \node[circle, draw=black, fill=white, inner sep=0pt,minimum size=0.7em] at (0,0) {\tiny $0$};
    \node[circle, draw=black, fill=white, inner sep=0pt,minimum size=0.7em] at (1,1) {\tiny $0$};
    \node[circle, draw=black, fill=white, inner sep=0pt,minimum size=0.7em] at (1,0) {\tiny $2$};
    \node[circle, draw=black, fill=white, inner sep=0pt,minimum size=0.7em] at (0,1) {\tiny $1$};
    \node[circle, draw=black, fill=white, inner sep=0pt,minimum size=0.7em] at (2,1) {\tiny $2$};
    \node[circle, draw=black, fill=white, inner sep=0pt,minimum size=0.7em] at (1,2) {\tiny $1$};
    \node[circle, draw=black, fill=white, inner sep=0pt,minimum size=0.7em] at (0,2) {\tiny $2$};
    \node[circle, draw=black, fill=white, inner sep=0pt,minimum size=0.7em] at (2,0) {\tiny $1$};
    \node[circle, draw=black, fill=white, inner sep=0pt,minimum size=0.7em] at (0,3) {\tiny $0$};
    \node[circle, draw=black, fill=white, inner sep=0pt,minimum size=0.7em] at (3,0) {\tiny $0$};
    
  \end{tikzpicture}
  \caption{Honeycomb triangulation of $3\Delta_2$}
  \label{fig:delta=3}
\end{figure}

\begin{proposition}
  \label{prop:orientability-triangle}
  Let $\cA=\delta\Delta_2\cap\ZZ^2$ be the set of lattice points in the dilated standard triangle.
  Then the smooth locus of $Y_\cA^+$ is orientable if and only if $\delta$ is odd.
\end{proposition}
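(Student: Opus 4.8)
The plan is to apply Theorem~\ref{thm:orientable} with $d=2$ and $P=\delta\Delta_2$, turning the stated combinatorial criterion into a small linear system over $\mathbb{F}_2$. First I would record the facet data: the triangle $\delta\Delta_2$ is cut out by $x_1\ge 0$, $x_2\ge 0$, $x_1+x_2\le\delta$, so $m=3$ and the primitive rows $(u_i,-b_i)$ of the extended matrix $(U\mid -b)$ are $(1,0,0)$, $(0,1,0)$, $(-1,-1,-\delta)$; in particular $b_1=b_2=0$ and $b_3=\delta$. Computing the sign vectors $\epsilon_i^+$ as prescribed then gives
\[
  \epsilon_1^+=(1,-1,1),\qquad \epsilon_2^+=(1,1,-1),\qquad \epsilon_3^+=\bigl((-1)^\delta,-1,-1\bigr)\ \in\ \{\pm1\}^3 .
\]

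Next I would pass to additive notation, identifying $\{\pm1\}^3$ with $\mathbb{F}_2^3$ via $-1\mapsto 1$; products of basis vectors become sums, and the three sign vectors become $v_1=(0,1,0)$, $v_2=(0,0,1)$, $v_3=(\bar\delta,1,1)$ with $\bar\delta=\delta\bmod 2$. The key reformulation I would establish is: a basis $\{b_1,b_2,b_3\}$ of $\mathbb{F}_2^3$ for which each $v_i$ is a sum of an odd number of the $b_j$ exists if and only if there is a nonzero linear functional $\lambda\colon\mathbb{F}_2^3\to\mathbb{F}_2$ with $\lambda(v_1)=\lambda(v_2)=\lambda(v_3)=1$. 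For the forward direction, the map $\sum_{j\in S}b_j\mapsto \#S\bmod 2$ is $\mathbb{F}_2$-linear (symmetric differences add cardinalities mod $2$), is nonzero since it sends each $b_j$ to $1$, and detects exactly the odd-sum condition. For the converse, the fibre $\lambda^{-1}(1)$ is an affine plane $c+\ker\lambda$, and choosing a basis $u_1,u_2$ of $\ker\lambda$ the triple $c,\,c+u_1,\,c+u_2$ lies in $\lambda^{-1}(1)$, is a basis of $\mathbb{F}_2^3$ (its pairwise sums span $\ker\lambda$ and $c\notin\ker\lambda$), and its parity functional agrees with $\lambda$ on it, hence equals $\lambda$. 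I expect this reformulation to be the only non-routine step; everything around it is bookkeeping.

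Finally I would solve the system. Writing $\lambda=(a,b,c)$, the equations $\lambda(v_1)=\lambda(v_2)=1$ force $b=c=1$, and then $\lambda(v_3)=1$ becomes $a\bar\delta+1+1=1$, i.e.\ $a\bar\delta=1$. If $\delta$ is odd, $\lambda=(1,1,1)$ works, so by Theorem~\ref{thm:orientable} the smooth locus of $Y_\cA^+$ is orientable; if $\delta$ is even the equation $0=1$ has no solution, no admissible $\lambda$ (hence no admissible basis) exists, and the smooth locus of $Y_\cA^+$ is not orientable. This yields the claimed equivalence.
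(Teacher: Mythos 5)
Your proof is correct and follows essentially the same route as the paper: apply Theorem~\ref{thm:orientable}, compute the facet sign vectors $\epsilon_i^+$, and decide the existence of a suitable basis by linear algebra over $\mathbb{F}_2$ (the paper does this for the lattice-equivalent triangle $\delta\Delta_2'$, you for $\delta\Delta_2$ directly, which merely permutes the roles of the three vectors). If anything, your parity-functional reformulation treats the non-orientable case more completely than the paper's brief rank-over-GF$(2)$ remark: for $\delta$ even the genuine obstruction is that the three vectors are the distinct nonzero elements of a plane, so their sum vanishes, while any admissible parity functional would have to take the value $1$ on each of them.
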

\begin{proof}
  Instead of $\delta\Delta_2$ we analyze the lattice equivalent triangle $\delta\Delta_2'$.
  Its three vertices are $0$, $e_1$ and $e_1+e_2$, and its three facets read $-x \geq -\delta$, $y \geq 0$ and $-x+y \geq 0$.
  From this we compute 
  \[
    \begin{aligned}
      \epsilon_1^+ &= ((-1)^{\delta},-1,1) \\
      \epsilon_2^+ &= (1,1,-1) \\
      \epsilon_3^+ &= (1,-1,-1) 
    \end{aligned}
  \]
  For $\delta$ odd these three vectors form a basis.
  However, if $\delta$ is even, then the first column of the $3{\times}3$-matrix with rows $\epsilon_i^+$ is trivial.
  Hence the row rank (over GF$(2)$ in exponential form) equals two.
  The claim follows from Theorem~\ref{thm:orientable}.
\end{proof}



The main result of \cite[Theorem 3]{JoswigZiegler:2014} is a general formula for signatures of triangulations of lattice polygons.
The honeycomb triangles are explicitly mentioned in loc.\ cit.
\begin{proposition}[{\cite[Example 7]{JoswigZiegler:2014}}]
  The honeycomb triangulation of the dilated standard triangle $\delta\Delta_2$ has signature $\delta$.
\end{proposition}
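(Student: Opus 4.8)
The plan is to turn the computation of the signature into an elementary lattice-point count. Write $T$ for the honeycomb triangulation of $\delta\Delta_2$. By construction $T$ is the image under the unimodular map $\tau$ of \eqref{eq:tau} of the alcoved triangulation of $\delta\Delta_2'=\conv\{0,e_1,e_1+e_2\}$ dilated by $\delta$, and the alcoved triangulation consists of unimodular simplices; hence every maximal cell of $T$ is unimodular, i.e.\ has normalized volume $1$. Since $1$ is odd, \emph{every} maximal cell of $T$ contributes to $\sigma(T)$, so $\sigma(T)$ is simply the absolute difference between the number of black and the number of white maximal cells, where black/white is the $2$-coloring of the dual graph of $T$ provided by foldability. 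Because the dual graph of a triangulation of a (convex, hence connected) polytope is connected, this bipartition is the unique proper $2$-coloring of that graph.

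Next I would make $T$ explicit. Slicing the unit squares of $\delta\Delta_2'=\{(z_1,z_2)\mid 0\le z_2\le z_1\le \delta\}$ along the lines $z_1-z_2=\text{const}$ and pulling the result back through $\tau$, one checks directly that the maximal cells of $T$ are the \emph{upward} triangles $\conv\{(a,b),(a{+}1,b),(a,b{+}1)\}$ with $a,b\ge 0$ and $a+b\le\delta-1$, together with the \emph{downward} triangles $\conv\{(a,b),(a{-}1,b),(a,b{-}1)\}$ with $a,b\ge 1$ and $a+b\le\delta$. A short inspection of the three edge directions shows that every interior edge of $T$ is shared by exactly one upward and one downward triangle, so the partition of the maximal cells of $T$ into upward and downward triangles is a proper $2$-coloring of the dual graph. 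By the uniqueness noted above, this partition coincides (up to swapping the two names) with the black/white coloring.

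It then only remains to count the two classes. The upward triangles are in bijection with the pairs $(a,b)\in\ZZ_{\ge 0}^2$ satisfying $a+b\le\delta-1$, of which there are $\binom{\delta+1}{2}=\tfrac{\delta(\delta+1)}{2}$; the downward triangles are in bijection with the pairs $(a,b)\in\ZZ_{\ge 1}^2$ satisfying $a+b\le\delta$, of which there are $\binom{\delta}{2}=\tfrac{\delta(\delta-1)}{2}$ (substitute $a\gets a-1$, $b\gets b-1$). Hence
\[
  \sigma(T) \ = \ \Bigl|\tfrac{\delta(\delta+1)}{2}-\tfrac{\delta(\delta-1)}{2}\Bigr| \ = \ \delta \enspace ,
\]
which is the claim. (As a sanity check, for $\delta=3$ this gives $6-3=3$, matching Figure~\ref{fig:delta=3}.)

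The only step that is not completely routine is the identification of the upward/downward partition with the dual-graph bipartition; once the maximal cells of $T$ are written down explicitly, this is forced by the connectedness of the dual graph, so I do not anticipate a real obstacle. The statement is of course also a special case of the general signature formula of \cite{JoswigZiegler:2014}, and the argument above is precisely its specialization to the honeycomb family.
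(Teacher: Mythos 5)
Your proposal is correct, and it takes a different route from the paper: the paper offers no argument of its own, but simply invokes the general signature formula for dense foldable triangulations of lattice polygons from \cite{JoswigZiegler:2014} (their Theorem~3), of which the honeycomb triangles are Example~7 there. You instead give a self-contained elementary count, and all the ingredients check out: the honeycomb triangulation is unimodular (it is the image under $\tau$ of the alcoved triangulation, whose cells are unimodular), so every maximal cell has odd normalized volume and contributes to $\sigma(T)$; the explicit cell description is right, since $\tau$ turns the alcoved cutting lines $z_1\in\ZZ$, $z_2\in\ZZ$, $z_1-z_2\in\ZZ$ into $x+y\in\ZZ$, $y\in\ZZ$, $x\in\ZZ$, producing exactly the upward and downward unit triangles you list; the upward/downward partition is a proper $2$-coloring of the dual graph, and connectedness of that graph indeed forces it to agree with the black/white bipartition coming from foldability; and the counts $\binom{\delta+1}{2}$ and $\binom{\delta}{2}$ are correct (their sum $\delta^2$ matches Lemma~\ref{lem:fvector}), giving $\sigma(T)=\delta$. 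What the two approaches buy is complementary: your argument is short, explicit and verifiable within the planar honeycomb family (and makes the bipartition concrete, which is also useful for writing down Wronski systems), whereas the citation to \cite{JoswigZiegler:2014} places the computation inside a formula valid for arbitrary dense foldable triangulations of lattice polygons, where the signature is read off from boundary data alone. Only a cosmetic remark: you speak of \enquote{pulling back} through $\tau$, while $\delta\Delta_2=\tau\cdot\delta\Delta_2'$ means you push the alcoved triangulation forward by $\tau$; this does not affect the cell description you obtain.
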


It is straightforward to describe the f-vector of the honeycomb triangulation.
A unimodular triangulation of a lattice polytope is necessarily full, in arbitrary dimension.
In the plane the converse is true.
Therefore, for the rest of this paper, we can use the terms \enquote{unimodular} and \enquote{full} interchangeably.

\begin{lemma}\label{lem:fvector}
  Every full triangulation of $\delta\Delta_2$ has exactly $\tfrac{1}{2}(\delta+1)(\delta+2)$ vertices, $\tfrac{3}{2}(\delta^2+\delta)$ edges and $\delta^2$ triangles.
  There are precisely $\tfrac{1}{2}(\delta-2)(\delta-1)$ interior vertices and $\tfrac{3}{2}(\delta^2-\delta)$ interior edges.
\end{lemma}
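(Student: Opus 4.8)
The plan is to determine the three f-vector entries in turn, using the fact recalled just above that in the plane a full triangulation of a lattice polygon is automatically unimodular. First I would count the vertices: by definition of \enquote{full} the vertex set of the triangulation is exactly $\cA=\delta\Delta_2\cap\ZZ^2$, and the lattice points of $\delta\Delta_2$ are in bijection with the monomials $x^iy^j$ satisfying $i,j\geq 0$ and $i+j\leq\delta$, of which there are $\binom{\delta+2}{2}=\tfrac12(\delta+1)(\delta+2)$. Next I would count the triangles: unimodularity means every maximal cell has normalized volume $1$, and since $\nvol(\delta\Delta_2)=\delta^2\nvol(\Delta_2)=\delta^2$, summing over the cells yields exactly $\delta^2$ triangles.

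For the edges I would invoke Euler's formula for the planar graph underlying the triangulation. With $V=\tfrac12(\delta+1)(\delta+2)$ vertices, $E$ edges, and $F=\delta^2+1$ two-dimensional regions (the $\delta^2$ triangles together with the unbounded face), the relation $V-E+F=2$ gives
\[
  E \ = \ V+F-2 \ = \ \tfrac12(\delta+1)(\delta+2)+\delta^2-1 \ = \ \tfrac32(\delta^2+\delta)\enspace .
\]

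The interior counts then follow by subtracting the boundary contributions. The boundary of $\delta\Delta_2$ consists of three sides, each of lattice length $\delta$; hence it contains $3\delta$ lattice points, and since the triangulation subdivides each side into $\delta$ unit segments, it has $3\delta$ boundary edges. Subtraction gives $\tfrac12(\delta+1)(\delta+2)-3\delta=\tfrac12(\delta-1)(\delta-2)$ interior vertices and $\tfrac32(\delta^2+\delta)-3\delta=\tfrac32(\delta^2-\delta)$ interior edges. The only input that is not a one-line computation is the equivalence \enquote{full $=$ unimodular in the plane}, which has already been recalled, so I do not expect any genuine obstacle here; the proof is essentially a bookkeeping argument built on Euler's formula and the normalized-volume identity.
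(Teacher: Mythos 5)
Your proposal is correct and follows essentially the same route as the paper: vertices from the lattice point count, triangles from unimodularity and the normalized volume $\delta^2$, edges via Euler's relation, and the interior counts by subtracting the $3\delta$ boundary lattice points and edges. The only cosmetic difference is that you use the planar-graph form $V-E+F=2$ with the unbounded face, while the paper phrases it as the Euler characteristic of a ball being one.
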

\begin{proof}
  The number of vertices is clear.
  By unimodularity the number of triangles agrees with the normalized area.
  With this information equipped, the number of edges follows because the Euler characteristic of a ball equals one.
  There are exactly $3\delta$ boundary edges; this determines the number of interior edges.
\end{proof}

Note that the number of interior lattice points of the triangle $\delta\Delta_2$ is exactly the geometric genus of the associated Wronski curves, which are smooth for a unimodular triangulation.
To study the dependence on the height function we provide a full characterization.
That is, we give an irredudant inequality description of the secondary cone of the honeycomb triangulation of $\delta\Delta_2$, including its $3$-dimensional lineality space.

\begin{proposition}
  \label{prop:liftfunc}
  A function $\omega:\delta\Delta_2\cap\ZZ^2\rightarrow \RR$ induces the honeycomb triangulation on $\delta\Delta_2$ if and only if the following strict homogeneous linear inequalities are satisfied, for all integral $i,j\geq 1$ with $i+j\leq\delta$:
  \begin{enumerate}
  \item $\omega(i-1,j-1)+\omega(i,j) > \omega(i-1,j)+\omega(i,j-1)$, 
  \item $\omega(i-1,j)+\omega(i+1,j-1) > \omega(i,j-1)+\omega(i,j)$, and
  \item $\omega(i-1,j-1)+\omega(i-1,j+1) > \omega(i-1,j)+\omega(i,j)$.
  \end{enumerate}
\end{proposition}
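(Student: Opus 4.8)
The plan is to identify the set of height functions inducing the honeycomb triangulation $T$ of $\delta\Delta_2$ with the (relative) interior of the secondary cone $\operatorname{Sec}(T)$ and then to read that cone off from the local folding inequality at each interior edge of $T$. Recall from the theory of secondary fans \cite[Chapter~5]{Triangulations} that a height function $\omega$ induces a triangulation $T$ of a lattice point configuration precisely when, for every interior edge $e$ of $T$ — with $\sigma_1(e),\sigma_2(e)$ the two maximal cells meeting along $e$ and $Q_e=\sigma_1(e)\cup\sigma_2(e)$ the quadrilateral they span — the restriction of $\omega$ to $Q_e$ is strictly convex along the diagonal $e$; equivalently, $\operatorname{Sec}(T)$ is the intersection of the corresponding closed halfspaces. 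So the proof reduces to enumerating the interior edges of $T$ with their ambient quadrilaterals and writing down the resulting inequalities.

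First I would pin down the cells of $T$. Passing through the unimodular map $\tau$ of \eqref{eq:tau}, the honeycomb triangulation of $\delta\Delta_2$ is the $\tau$-image of the alcoved triangulation of $\delta\Delta_2'$, and a short computation (illustrated by Figure~\ref{fig:delta=3} for $\delta=3$) identifies its maximal cells as the \enquote{upward} triangles $\{(i,j),(i{+}1,j),(i,j{+}1)\}$ with $i,j\ge0$ and $i+j\le\delta-1$, together with the \enquote{downward} triangles $\{(i{+}1,j),(i,j{+}1),(i{+}1,j{+}1)\}$ with $i,j\ge0$ and $i+j\le\delta-2$. In particular every edge of $T$ is a primitive lattice segment in one of the three directions $e_1$, $e_2$, $e_1-e_2$, and such an edge is interior exactly when it lies on none of the three sides $\{x=0\}$, $\{y=0\}$, $\{x+y=\delta\}$ of $\delta\Delta_2$. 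This sorts the interior edges into three families — horizontal, vertical, anti-diagonal — and a direct count shows each is naturally indexed by the pairs $(i,j)$ with $i,j\ge1$ and $i+j\le\delta$, hence of cardinality $\tfrac12\delta(\delta-1)$ each and $\tfrac32(\delta^2-\delta)$ in total, in agreement with Lemma~\ref{lem:fvector} and with the three families of inequalities in the statement.

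Next I would compute the folding inequality for one representative of each family. In every case $Q_e$ turns out to be a lattice parallelogram whose two diagonals are $e$ and the flipped segment, so the unique affine relation among its four vertices has coefficients $+1,+1,-1,-1$; hence strict convexity of $\omega$ along $e$ amounts to $\omega(p_1)+\omega(p_2)>\omega(q_1)+\omega(q_2)$, where $q_1,q_2$ are the endpoints of $e$ and $p_1,p_2$ are the remaining two vertices of $Q_e$. For the anti-diagonal edge $e=[(i,j{-}1),(i{-}1,j)]$ the ambient parallelogram is the unit square on $(i{-}1,j{-}1),(i,j{-}1),(i,j),(i{-}1,j)$ with flip vertices $(i{-}1,j{-}1)$ and $(i,j)$, which gives exactly inequality (1); treating the vertical edge $e=[(i,j{-}1),(i,j)]$ and the horizontal edge $e=[(i{-}1,j),(i,j)]$ in the same way yields inequalities (2) and (3), with $(i,j)$ ranging over $i,j\ge1$, $i+j\le\delta$ in each case. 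This shows that (1)--(3) cut out $\operatorname{Sec}(T)$.

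It then remains to check irredundancy and to locate the $3$-dimensional lineality space. For the lineality: an $\omega$ satisfying all of (1)--(3) with equality is affine on each quadrilateral $Q_e$, and since the dual graph of $T$ is connected this forces $\omega$ to be the restriction to $\cA$ of a single affine functional on $\RR^2$; as $\cA$ affinely spans $\RR^2$, this space has dimension exactly $3$. For irredundancy: a sufficiently small perturbation of an interior height function can flip exactly the folding inequality belonging to a prescribed edge (it involves only the four vertices of $Q_e$), producing the adjacent regular triangulation obtained by that flip, so each of the $\tfrac32(\delta^2-\delta)$ halfspaces supports a facet of $\operatorname{Sec}(T)$ \cite[Chapter~5]{Triangulations}; equivalently, for each of the three families one exhibits a height function violating the single inequality in question while satisfying all others. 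I expect the main obstacle to be purely organisational — correctly matching the three ambient parallelograms and their index ranges to the three stated inequalities, and being careful with the boundary edges in the count — since the remaining ingredients are all standard facts about secondary fans.
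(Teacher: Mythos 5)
Your proposal is correct and takes essentially the same route as the paper: the paper's proof likewise identifies the facets of the secondary cone with the interior edges of the honeycomb triangulation (citing \cite[Definition~5.2.4, Corollary~5.2.7]{Triangulations} and the edge count of Lemma~\ref{lem:fvector}) and reads off the folding inequalities, so your contribution is merely to make explicit the classification of interior edges into the three families and the parallelogram relations that the paper delegates to the cited reference. One small remark: your computation for the horizontal edge $[(i-1,j),(i,j)]$ yields $\omega(i,j-1)+\omega(i-1,j+1)>\omega(i-1,j)+\omega(i,j)$, which is the correct circuit relation (the four points printed in inequality (3) of the statement admit no affine relation with coefficients $+1,+1,-1,-1$), so the discrepancy indicates a typo in the statement ($\omega(i-1,j-1)$ should read $\omega(i,j-1)$) rather than a gap in your argument.
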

\begin{proof}
  According to \cite[Corollary~5.2.7]{Triangulations} the facets of the secondary cone of any triangulation bijectively correspond to the internal cells of codimension one.
  In Lemma~\ref{lem:fvector} we saw that the honeycomb triangulation has exactly $\tfrac{3}{2}(\delta^2-\delta)$ interior edges, which are codimension one in the plane.
  The given inequalities express the conditions on the \enquote{folding form} from \cite[Definition~5.2.4]{Triangulations}.
\end{proof}
Since the facet description of the secondary cone does not mention the scaling parameter $\delta$, the following consequence is immediate.

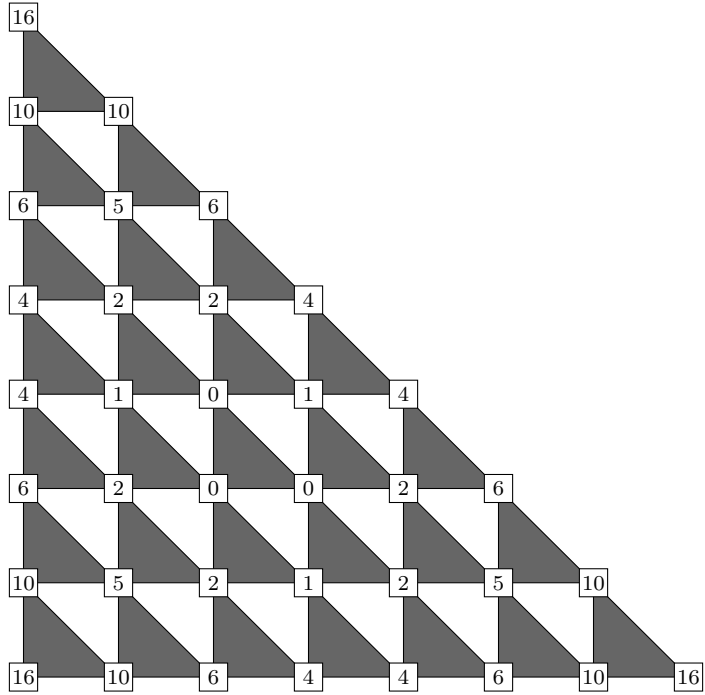
\begin{figure}[th]

  \begin{tikzpicture}[scale=1.25]
	\definecolor{dark}{rgb}{0.4,0.4,0.4}
	\foreach \x in {0,1,...,6}{
		\pgfmathtruncatemacro{\y}{7-\x}
		\foreach \k in {0,...,\x}{
			\draw[fill=dark] (\x,\y) (\k,\y-1) -- (\k+1,\y-1) -- (\k,\y) -- cycle;}
	}

\node[rectangle, draw=black, fill=white, inner sep=0pt,minimum size=0.9em] at (0,0) {\tiny $16$};
\node[rectangle, draw=black, fill=white, inner sep=0pt,minimum size=0.9em] at (0,1) {\tiny $10$};
\node[rectangle, draw=black, fill=white, inner sep=0pt,minimum size=0.9em] at (0,2) {\tiny $6$};
\node[rectangle, draw=black, fill=white, inner sep=0pt,minimum size=0.9em] at (0,3) {\tiny $4$};
\node[rectangle, draw=black, fill=white, inner sep=0pt,minimum size=0.9em] at (0,4) {\tiny $4$};
\node[rectangle, draw=black, fill=white, inner sep=0pt,minimum size=0.9em] at (0,5) {\tiny $6$};
\node[rectangle, draw=black, fill=white, inner sep=0pt,minimum size=0.9em] at (0,6) {\tiny $10$};
\node[rectangle, draw=black, fill=white, inner sep=0pt,minimum size=0.9em] at (0,7) {\tiny $16$};

\node[rectangle, draw=black, fill=white, inner sep=0pt,minimum size=0.9em] at (1,0) {\tiny $10$};
\node[rectangle, draw=black, fill=white, inner sep=0pt,minimum size=0.9em] at (2,0) {\tiny $6$};
\node[rectangle, draw=black, fill=white, inner sep=0pt,minimum size=0.9em] at (3,0) {\tiny $4$};
\node[rectangle, draw=black, fill=white, inner sep=0pt,minimum size=0.9em] at (4,0) {\tiny $4$};
\node[rectangle, draw=black, fill=white, inner sep=0pt,minimum size=0.9em] at (5,0) {\tiny $6$};
\node[rectangle, draw=black, fill=white, inner sep=0pt,minimum size=0.9em] at (6,0) {\tiny $10$};
\node[rectangle, draw=black, fill=white, inner sep=0pt,minimum size=0.9em] at (7,0) {\tiny $16$};

\node[rectangle, draw=black, fill=white, inner sep=0pt,minimum size=0.9em] at (1,1) {\tiny $5$};
\node[rectangle, draw=black, fill=white, inner sep=0pt,minimum size=0.9em] at (1,2) {\tiny $2$};
\node[rectangle, draw=black, fill=white, inner sep=0pt,minimum size=0.9em] at (1,3) {\tiny $1$};
\node[rectangle, draw=black, fill=white, inner sep=0pt,minimum size=0.9em] at (1,4) {\tiny $2$};
\node[rectangle, draw=black, fill=white, inner sep=0pt,minimum size=0.9em] at (1,5) {\tiny $5$};
\node[rectangle, draw=black, fill=white, inner sep=0pt,minimum size=0.9em] at (1,6) {\tiny $10$};

\node[rectangle, draw=black, fill=white, inner sep=0pt,minimum size=0.9em] at (2,1) {\tiny $2$};
\node[rectangle, draw=black, fill=white, inner sep=0pt,minimum size=0.9em] at (3,1) {\tiny $1$};
\node[rectangle, draw=black, fill=white, inner sep=0pt,minimum size=0.9em] at (4,1) {\tiny $2$};
\node[rectangle, draw=black, fill=white, inner sep=0pt,minimum size=0.9em] at (5,1) {\tiny $5$};
\node[rectangle, draw=black, fill=white, inner sep=0pt,minimum size=0.9em] at (6,1) {\tiny $10$};

\node[rectangle, draw=black, fill=white, inner sep=0pt,minimum size=0.9em] at (2,2) {\tiny $0$};
\node[rectangle, draw=black, fill=white, inner sep=0pt,minimum size=0.9em] at (2,3) {\tiny $0$};
\node[rectangle, draw=black, fill=white, inner sep=0pt,minimum size=0.9em] at (2,4) {\tiny $2$};
\node[rectangle, draw=black, fill=white, inner sep=0pt,minimum size=0.9em] at (2,5) {\tiny $6$};

\node[rectangle, draw=black, fill=white, inner sep=0pt,minimum size=0.9em] at (3,2) {\tiny $0$};
\node[rectangle, draw=black, fill=white, inner sep=0pt,minimum size=0.9em] at (4,2) {\tiny $2$};
\node[rectangle, draw=black, fill=white, inner sep=0pt,minimum size=0.9em] at (5,2) {\tiny $6$};

\node[rectangle, draw=black, fill=white, inner sep=0pt,minimum size=0.9em] at (3,3) {\tiny $1$};
\node[rectangle, draw=black, fill=white, inner sep=0pt,minimum size=0.9em] at (3,4) {\tiny $4$};
\node[rectangle, draw=black, fill=white, inner sep=0pt,minimum size=0.9em] at (4,3) {\tiny $4$};
\end{tikzpicture}
  \caption{The minimal height function $\mu$ for $\delta=7$.
    As $7=3\cdot 3+1$ one can observe two inductive steps of the procedure sketched in Remark~\ref{rem:minimal}.
    This procedure starts at the central triangle $(2,2)+\Delta_2$, passes through $(1,1)+4\Delta_2$, and finally arrives at $7\Delta_2$.}
  \label{fig:minimal}
\end{figure}

\begin{corollary}
  A function $\omega:\ZZ^2\rightarrow \RR$ induces the honeycomb triangulation on the entire plane $\RR^2$ if and only if the three classes of strict homogeneous linear inequalities from Proposition~\ref{prop:liftfunc} are satisfied, for all $i,j\in\ZZ$.
\end{corollary}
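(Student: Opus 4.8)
The plan is to reduce the statement to Proposition~\ref{prop:liftfunc} by exploiting the translational symmetry of the honeycomb triangulation of the plane, which is exactly what makes the $\delta$-independence of the facet description usable.

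First I would record that the honeycomb triangulation $T$ of $\RR^2$ is invariant under every translation by a vector in $\ZZ^2$. Indeed, by construction $T$ is the image under the lattice automorphism $\tau\in\SL(2,\ZZ)$ of the triangulation of $\RR^2$ cut out by the affine Coxeter arrangement of type $A_2$, and each of the three families of lines $\{z_i-z_j=\ell\mid\ell\in\ZZ\}$ is mapped into itself by every integral translation; hence that triangulation is $\ZZ^2$-periodic, and since $\tau$ preserves $\ZZ^2$ so is its image $T$. Consequently, for every $v\in\ZZ^2$ and every $\delta$, the restriction of $T$ to the translated triangle $v+\delta\Delta_2$ is the translate $v+T_\delta$ of the honeycomb triangulation $T_\delta$ of $\delta\Delta_2$.

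Next I would use that inducing a triangulation is a local condition: a height function $\omega\colon\ZZ^2\to\RR$ induces $T$ if and only if, for all $v\in\ZZ^2$ and all $\delta$, the restriction of $\omega$ to $(v+\delta\Delta_2)\cap\ZZ^2$ lies in the secondary cone of $v+T_\delta$. In one direction this is clear; in the other, a piecewise linear function on $\RR^2$ that is affine on the cells of $T$ and is locally convex across each interior edge is globally convex, so its graph is the lower boundary of the lift and projects onto $T$. By Proposition~\ref{prop:liftfunc}, applied to $T_\delta$ and then conjugated by the translation $v$, membership in the secondary cone of $v+T_\delta$ is precisely the assertion that the three inequality classes of the proposition hold with the index pair $(i,j)$ replaced by $(i,j)+v$, for all integers $i,j\ge 1$ with $i+j\le\delta$. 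The decisive point, already emphasised before the statement, is that the shape of these inequalities is the same for every $\delta$; only the index range grows. Letting $v$ range over $\ZZ^2$ and $\delta$ over $\NN$, the union of all the resulting index sets is all of $\ZZ^2$, so $\omega$ induces $T$ exactly when the three inequality classes hold for every $i,j\in\ZZ$; the reverse implication is immediate, since the global conditions restrict to those for each window.

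The only point I expect to need real care is the justification that inducing $T$ on all of $\RR^2$ is equivalent to inducing every finite restriction $v+T_\delta$, because the secondary-cone and folding-form formalism of \cite[Chapter~5]{Triangulations} is set up for finite point configurations; one must either invoke its routine extension to locally finite configurations or argue the global-convexity reduction above directly. Everything else is bookkeeping: matching the three translated folding forms of $T_\delta$ with the three inequality classes of Proposition~\ref{prop:liftfunc}, and checking that the exhausting family of windows $v+\delta\Delta_2$ sweeps $(i,j)$ over all of $\ZZ^2$.
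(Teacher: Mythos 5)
Your proposal is correct and follows essentially the paper's own (unstated) reasoning: the paper treats the corollary as immediate from the fact that the inequality description in Proposition~\ref{prop:liftfunc} does not involve $\delta$, and your argument is precisely that observation made rigorous via $\ZZ^2$-periodicity, exhaustion of the plane by translated triangles, and the local-to-global convexity step. The extra care you flag about extending the finite secondary-cone formalism (or arguing global convexity directly) is exactly the right detail to supply, and it goes through as you sketch it.
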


\begin{remark}\label{rem:minimal}
  The secondary cone of any regular triangulation, $T$, of the point configuration $\cA$ is a full-dimensional open rational polyhedral cone in $\RR^\cA$.
  So any height function $\omega$ inducing $T$ on $\cA$ satisfies strict homogeneous linear inequalities like $\sum_{p\in\cA} u_p \omega_p > 0$ with $u_p\in\ZZ$.
  If $\omega$ is integral, then it satisfies the weak inhomogeneous inequality $\sum{p\in\cA} s_p \omega_p \geq 1$.
  Consequently, for given $T$, hence a nonnegative integral height vector $\omega$ which minimizes, e.g., $\sum_{p\in\cA} \omega_p$ can always be found by solving an integer linear program.

  If $T$ is the honeycomb triangulations of $\delta\Delta_2$, then such a minimal height function can be described inductively.
  For conciseness we assume that $\delta$ is congruent one modulo three.
  Then we can argue by induction; in the base case $\delta=1$ we take $\omega=0$, which is clearly optimal.
  Now we make the inductive step from $\delta$ to $\delta+3$.
  The triangle $P=(\delta+3)\Delta_2$ contains the triangle $P'=\conv((1,1),(\delta-1,1),(1,\delta-1))$ which is the same as $\delta\Delta_2$ shifted by the vector $(1,1)$.
  By induction we can assume that omega is defined on the lattice points $P'\cap\ZZ^2$.
  The remaining lattice points $p\in(P\setminus P')\cap\ZZ^2$ receive heights according to a distinction of three cases.
  In the first case, we assume that $p$ is the unique vertex of the triangle $p,q,r$ in $T$ which shares the edge $q,r$ with $P'$.
  Then there is a unique lattice point $s\in P'\cap\ZZ^2$ such that $q,r,s$ is a triangle in $T$.
  We can set
  \begin{equation}\label{eq:minimal}
    \omega(p) \ = \ \omega(q)+\omega(r)-\omega(s)+1 \enspace,
  \end{equation}
  where $\omega(q),\omega(r),\omega(s)$ are known, integral and minimal by induction.
  In the second case, we consider those lattice points in the boundary of $P$ which lie on a boundary edge $p,q$ of $T$ with one of the vertices which received a height in the first case.
  There is a unique vertex $r$ such that $p,q,r$ is a triangle of $T$, which is again adjacent to a triangle $q,r,s$ which lies in $P'$.
  Applying \eqref{eq:minimal} again yields the height of $p$.
  Now all lattice points in $P$ have their heights, except for the three vertices of the triangle $P$.
  So in the third an final case $p$ is one of these three vertices.
  Then $p$ lies in a unique triangle $p,q,r$ of $T$, and again there is a fourth point $s$ to form two triangles which allow to apply the formula \eqref{eq:minimal}.
  Clearly in each case the choice was minimal.

  If $\delta$ is congruent one modulo three, the minimal height function is unique; this is not the case otherwise.
  Figure~\ref{fig:minimal} displays the minimal height function for $\delta=7$.
\end{remark}

\section{Computational Results} \label{sec:compute}

In this section we finally study explicit Wronski pairs of curves arising from the honeycomb traingulation of $\delta\Delta_2$.
For a choice of generic parameters and height function each such \emph{honeycomb curve} is smooth of degree $\delta$ with genus $\tfrac{1}{2}(\delta-1)(\delta-2)$; the latter number agrees with the number of interior lattice points of $\delta\Delta_2$; cf.\ Lemma~\ref{lem:fvector}.
Most importantly, the honeycomb triangulations satisfy the assumptions of Theorem \ref{thm:lifting-exists}.
Consequently, for each $\delta\geq 1$ there exists a height function such that any corresponding Wronski pair intersects in at least $\delta$ real points when $\delta$ is odd.
Note that, trivially, this number $\delta$ agrees with the square root of the product of the degrees.
In view of a celebrated result of Shub and Smale \cite{Shub+Smale:1993} this means that Wronski pairs of generic honeycomb curves of odd degree always attain or exceed the average number of real solutions.

A key goal of this section is to explore the frontiers of current computational methods when choosing a
\textit{concrete} height function.  More precisely, for a given height function $\omega:\delta\Delta_2\cap\ZZ^2\to\NN$ we want to use these
software tools to investigate the real solutions of the system
\begin{equation}
  \label{eq:tosolve2}
  \sum_{(i,j) \in \cA_0} t^{\omega(i,j)} x^iy^j \ = \ \sum_{(i,j) \in \cA_1} t^{\omega(i,j)} x^iy^j \ = \ \sum_{(i,j) \in \cA_2} t^{\omega(i,j)} x^iy^j \ = \ 0 \enspace.
\end{equation}
which corresponds to the extra condition in Theorem \ref{thm:Soprunova+Sottile}.
From this we may then verify, with some degree of certainty, that a minimal $t_0$ such that \eqref{eq:tosolve2} has no solutions for $0 < t < t_0$ exists.
Consequently, using Theorem \ref{thm:Soprunova+Sottile}, we may then explicitly construct Wronski pairs associated to the honeycomb triangulations of odd degree $\delta$ with at least $\delta$ real solutions.

Since it is known that the choice of the height function matters (in terms of the structure of the solutions of
\eqref{eq:tosolve2}), we tried two different examples. Our first choice is
\begin{equation}\label{eq:rainer}
  \rho(i,j) \ = \ i^2 + j^2 + ij \enspace,
\end{equation}
which arises from applying the transformation $\tau$ from \eqref{eq:tau} to the height function \eqref{eq:lifting}.  Our
second choice is the minimal height function from Remark~\ref{rem:minimal}; here we call it $\mu$.
This minimal height function $\mu$ has the benefit of minimizing the degrees in \eqref{eq:tosolve2}, which suggests that the computations might be easier.
However, we will see below that this is not necessarily the case.

For our investigation we used the numerical software \HC by Breiding and Timme \cite{breiding2018} and the symbolic library \msolve by Berthomieu, Eder and Safey El Din \cite{berthomieu2021}.

As the name suggests \HC uses homotopy continuation to solve polynomial systems. Roughly this means that one solves
first an \enquote{easy} system of a similar structure as the one one is interested in and then deforms this start system
to the target system, tracking the solutions along the way. This is a very efficient way to solve polynomial systems but
may suffer from issues of instability or inexaustiveness.
We refer to \cite{sommese2005} for an introduction. Some
guarantees however can be provided: \HC implements the certification method in \cite{breiding2023} which enables one to
attempt to construct a bounding intervall for each computed approximation of a solution if the polynomial system is square
and the solution is non-singular (i.e. the jacobian of the system does not vanish at this solution).

In contrast \msolve uses Gröbner basis methods to compute solutions to polynomial system. This is done by first
computing a Gröbner basis for the ideal generated by the system in question using the F4 algorithm \cite{faugere1999}
and then converting this Gröbner basis into a univariate parametrization of the system using variants of the FGLM
algorithm \cite{faugere2017a}. As a symbolic method, \msolve offers a great deal of reliability but is often slower
than a numerical approach. Strictly speaking, the results produced are probabilistic as \msolve uses multi-modular methods
for the underlying Gröbner basis computations.


We start with the results obtained using \HC, recorded in Table~\ref{tbl:hc}.  In this table we record for both height
functions $\mu$ and $\rho$ respectively the number of certified real solutions, the number of singular (and thus
non-certifiable) real solutions and the time the computation took. For $\delta=17$ and $\omega = \mu$ \HC gave an error during the
construction of the start system. Note that even when all real solutions are certified, these results to not enable us
to extract a proven lower bound $t_0$ (in the context of Theorem \ref{thm:Soprunova+Sottile}) or guarantee that such a
lower bound exists because we cannot guarantee to have obtained all isolated solutions using \HC or that
positive-dimensional solutions are not present. Nonetheless, we can obtain some further evidence that we have obtained
the minimal $t_0$ in some cases, illustrated by the following examples for $\delta=3,5$ and $\omega = \rho$ for which the associated
systems of Wronski polynomials have no real solutions:

\colorlet{red1}{white!72!red}
\colorlet{red2}{white!63!red}
\colorlet{red3}{white!54!red}
\colorlet{red4}{white!45!red}
\colorlet{red5}{white!36!red}
\colorlet{red6}{white!27!red}
\colorlet{red7}{white!18!red}
\colorlet{red8}{white!9!red}

\colorlet{blue1}{white!72!blue}
\colorlet{blue2}{white!63!blue}
\colorlet{blue3}{white!54!blue}
\colorlet{blue4}{white!45!blue}
\colorlet{blue5}{white!36!blue}
\colorlet{blue6}{white!27!blue}
\colorlet{blue7}{white!18!blue}
\colorlet{blue8}{white!9!blue}

\begin{example}
  For $\delta = 3$ and $\omega = \rho$ we found no real solutions using \HC. For $\delta=5$ and $\omega = \rho$, the minimal
  $t_0>0$ for which \HC found a real solution was $1$. This would indicate that by Proposition
  \ref{prop:orientability-triangle} and Theorem \ref{thm:Soprunova+Sottile}, the corresponding Wronski systems
  arising from \ref{eq:wronski-triangle} have at least $3$ resp. $5$ real solutions, for generic choices of coefficients
  $c_0,c_1,c_2$ and $c_0',c_1',c_2'$ and any $t$ resp. $t<1$.

\begin{figure}[th] \centering
  \includegraphics[height=0.45\textwidth]{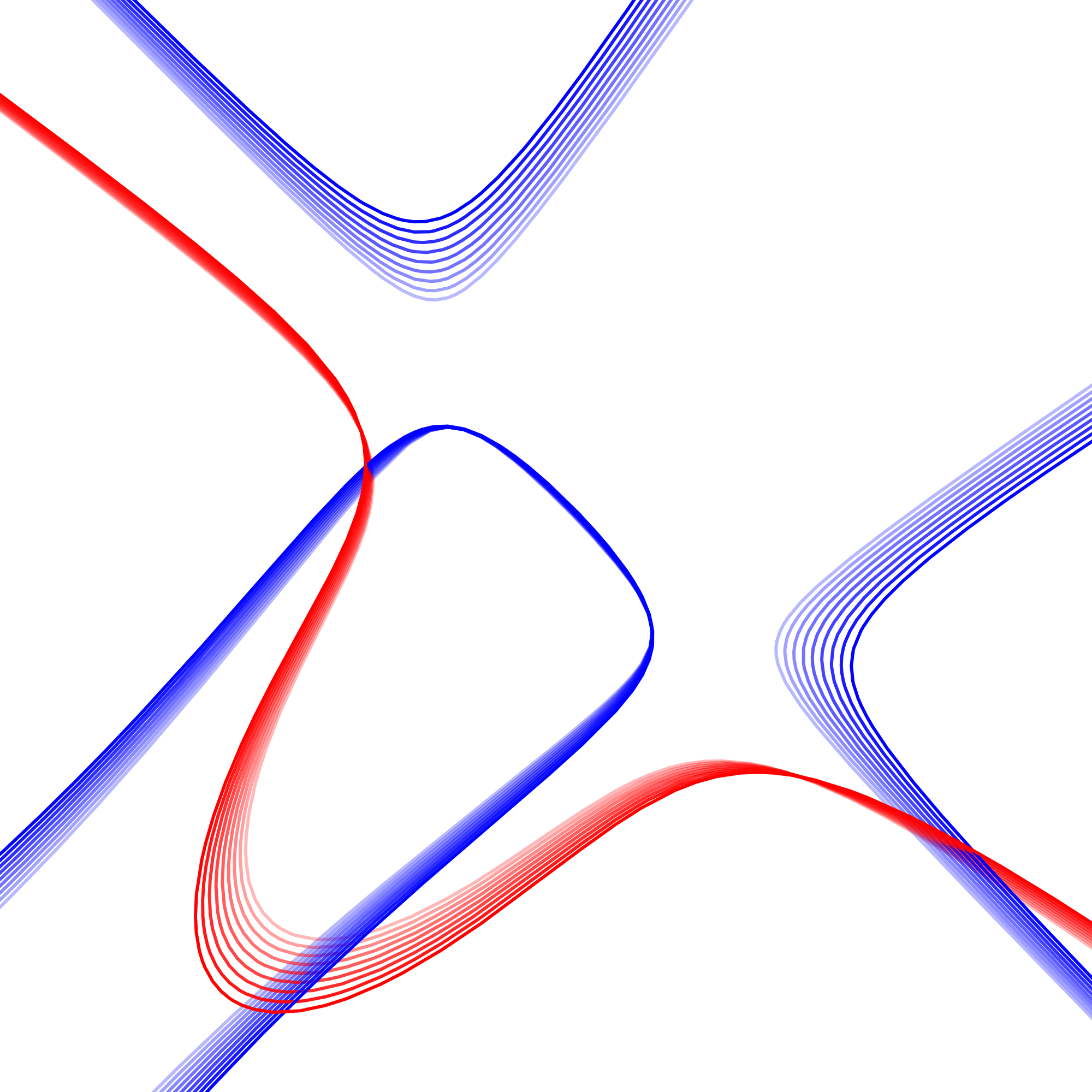}
  \includegraphics[height=0.45\textwidth]{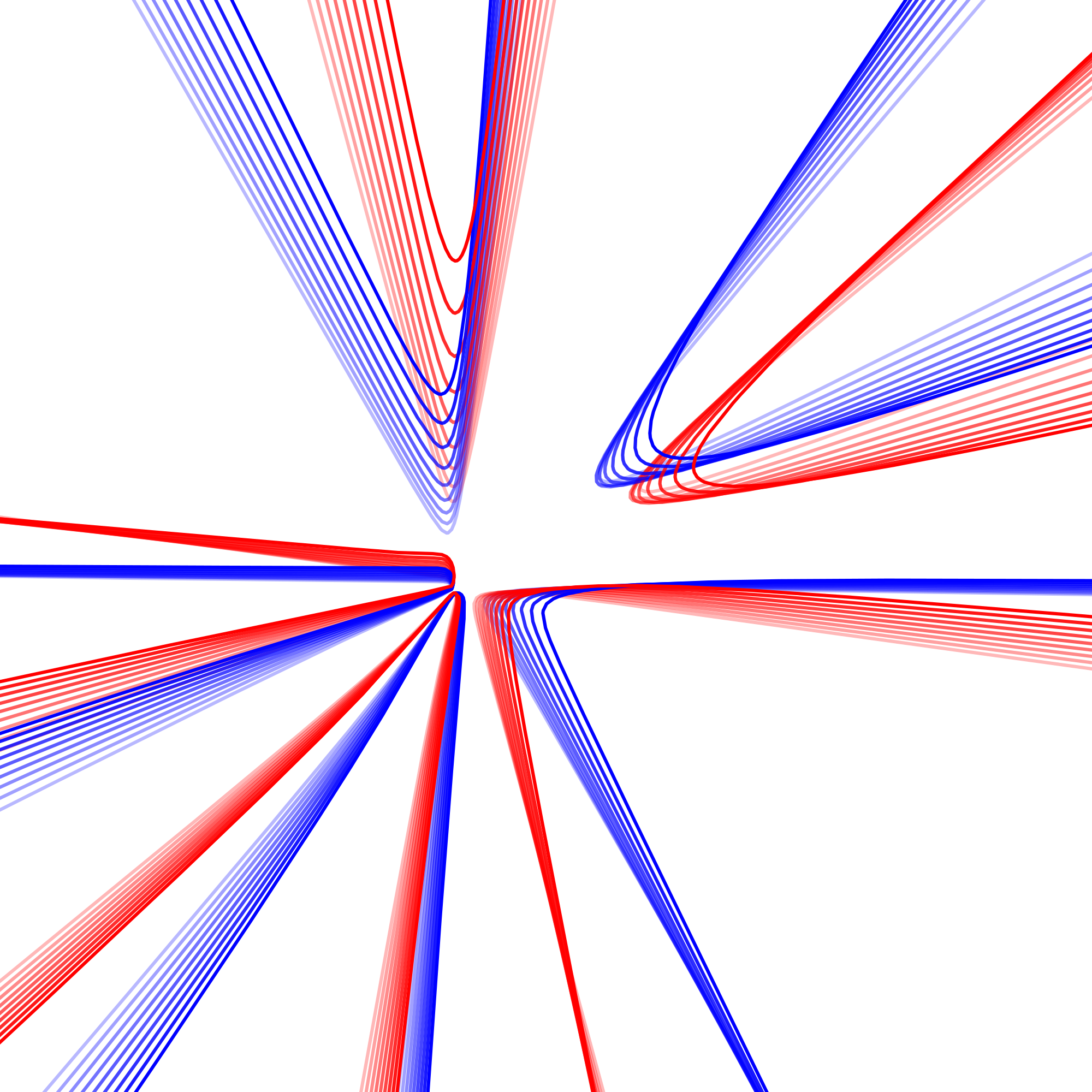}
  
  \caption{Left: family of Wronski pairs of cubic honeycomb curves intersecting in 3 points each.
    Right: family of Wronski pairs of quintic honeycomb curves intersecting in 5 points each.
    Each pair comprises one red and one blue curve.}
  \label{fig:rhocubic}
\end{figure}	        
  In Figure \ref{fig:rhocubic}, we depict two pairs of
  families of such Wronski curves with exactly $\delta$ points of intersection. On the left, $\delta$ is equal to $3$, the
  parameters are $(c_0,c_1,c_2)=(-3.14,-8.13,3.61)$, $(c_0',c_1',c_2')=(11.13,-9.34,1.82)$ and $t$ varies between $0.96$
  and $1$. On the right, $\delta$ is $5$, the parameters are $(c_0,c_1,c_2)=(0.79,0.11,-0.72)$,
  $(c_0',c_1',c_2')=(0.37,0.84,-0.97)$ and $t$ ranges from $0.52$ to $0.66$. We note that for random $t>1$,
  $\delta = 5$ and randomly chosen coefficients of the corresponding Wronski systems we only found systems with one real
  solution.
\end{example}

\begin{table*}[bh]
  \caption{Results using \HC}
  \label{tbl:hc}
  \begin{center}
    \begin{tabular}{l|lll|lll}
      $\omega$ & \multicolumn{3}{c}{$\rho$} & \multicolumn{3}{c}{$\mu$}\\
      $\delta$  & \# cert. real & \# sing. real & time & \# cert. real & \# sing. real & time\\
      \midrule
      3 & 0 & 0 & 2.1  & 0 & 1 & 2.3 \\   
      5 & 4 & 0 & 4.1 & 4 & 1 & 4.2 \\   
      7 & 4 & 0 & 5.8  & 4 & 1 & 5.2 \\   
      9 & 4 & 0 & 11.8  & 4 & 1 & 8.5  \\  
      11& 24 & 2 & 27.1  & 24 & 2 & 21.0 \\
      13& 10 & 2 & 59.3  & 10 & 2 & 42.4 \\
      15& 10 & 2 & 160.8 & 10 & 2 & 96.6 \\
      17 & 17 & 4 & 281.5 & \multicolumn{3}{c}{error}\\
    \end{tabular}
  \end{center}
\end{table*}

Next, we also attempted to compute the solutions to \eqref{eq:tosolve2} using both $\mu$ and $\rho$ with \msolve.  This 
returns the correct result to a guaranteed precision probabilistically. The data we obtained is summarized in Table
\ref{tbl:msolve}. For the minimal height function $\mu$, \msolve reported that the ideal $I_{\delta}$ generated by the
equations in \eqref{eq:tosolve2} is positive-dimensional, this is done probabilistically by computing the dimension
modulo a random prime number. This means in particular that \HC did not compute all solutions for the corresponding
cases. For $\delta = 9$ the computation did not finish after more than 12 hours. Otherwise the results are consistent with
the ones given in Table \ref{tbl:hc}.

\begin{table*}[bh]
  \caption{Results using \msolve}
  \label{tbl:msolve}
  \begin{center}
    \begin{tabular}{l|ll|ll}
      $\omega$ & \multicolumn{2}{c}{$\rho$} & \multicolumn{2}{c}{$\mu$} \\
      $\delta$  & \# real & time & \# real &  time\\
      \midrule
      3 & 0 & 0.7 & \multicolumn{2}{c}{$\dim > 0$}\\
      5 & 4 & 32.2 & \multicolumn{2}{c}{$\dim > 0$}\\
      7 & 4 & 5593.5 & \multicolumn{2}{c}{$\dim > 0$}\\
      9 & \multicolumn{2}{c}{?} & \multicolumn{2}{c}{$\dim > 0$}\\
    \end{tabular}
  \end{center}
\end{table*}

\begin{figure}[th] \centering
  \includegraphics[height=0.45\textwidth]{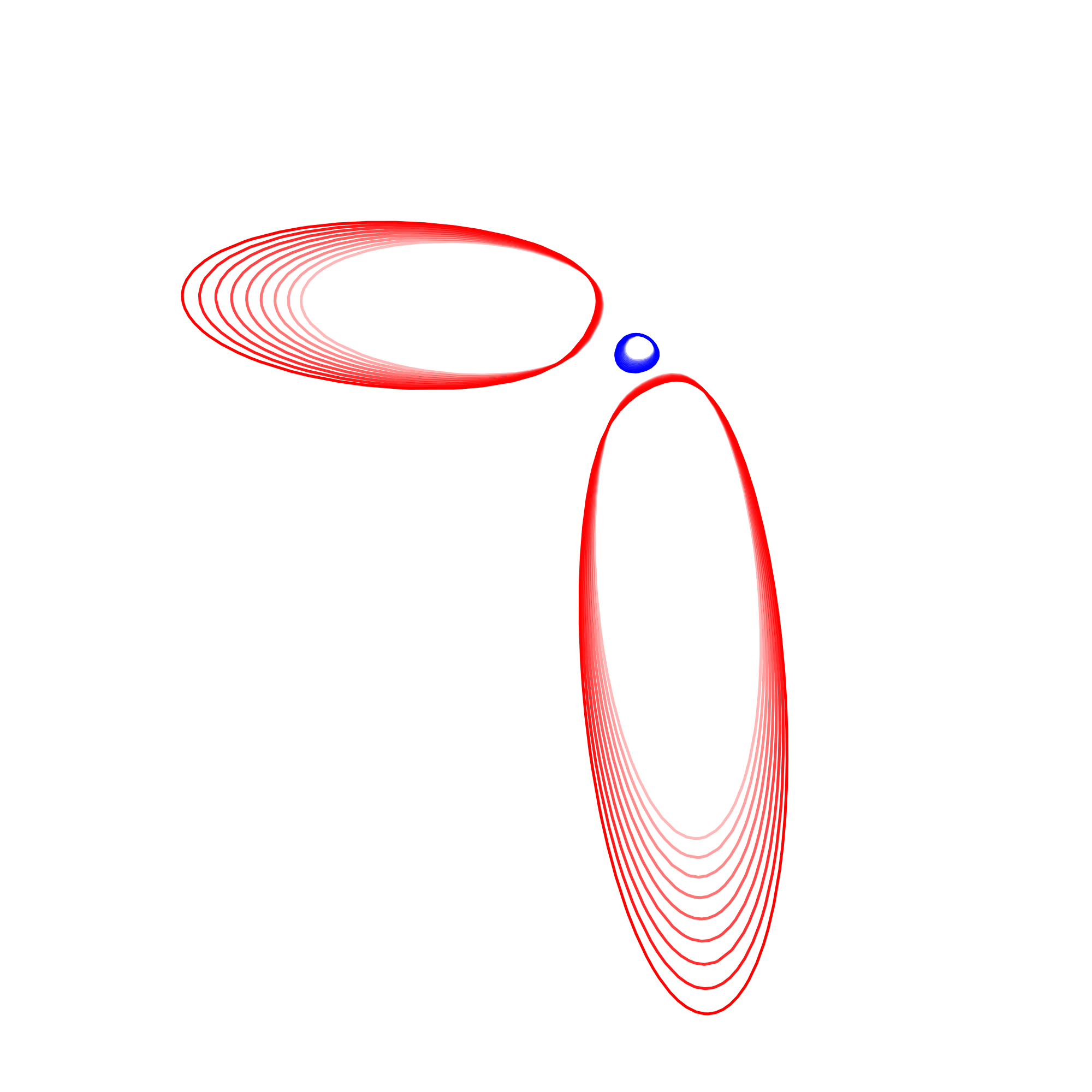}
  \includegraphics[height=0.45\textwidth]{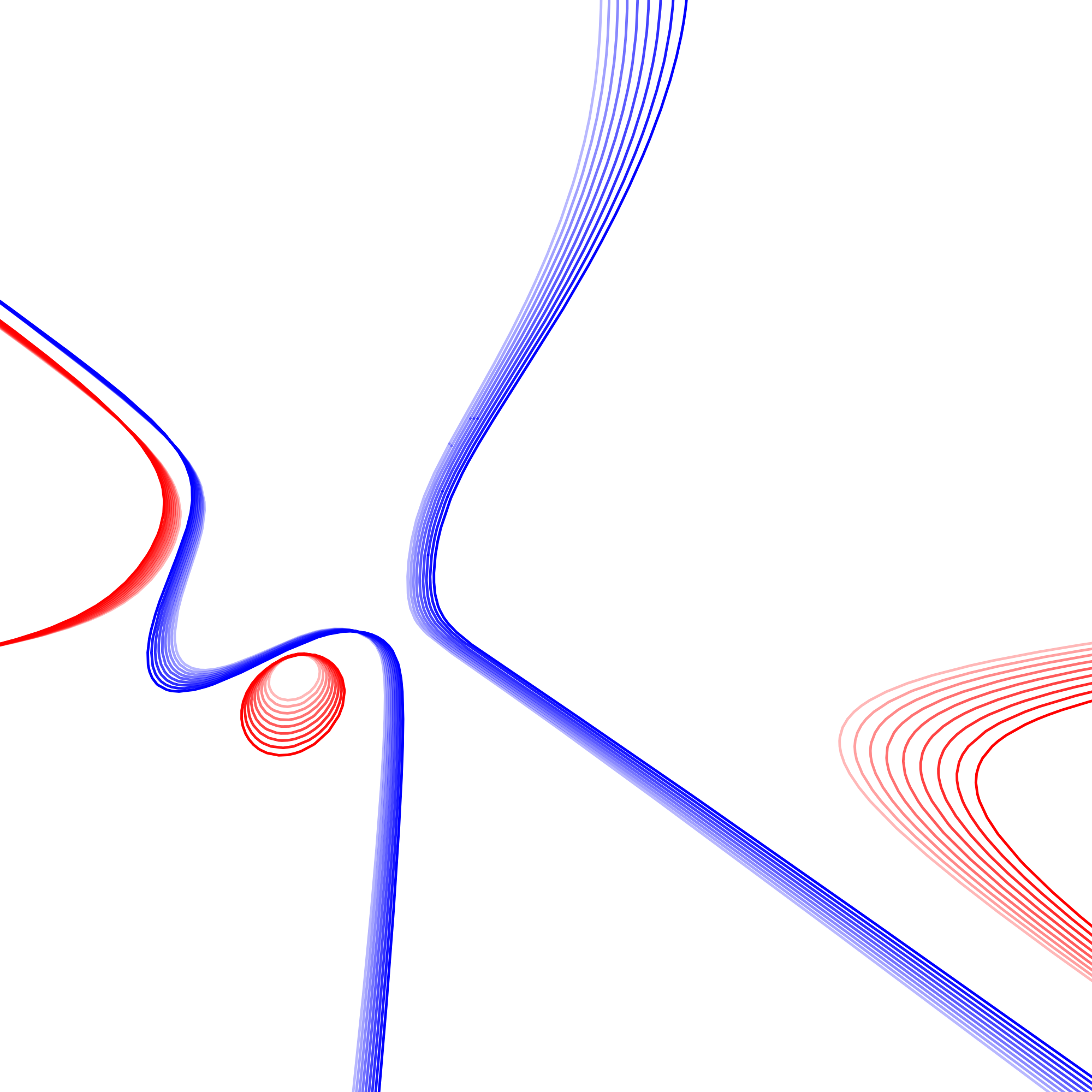}
	
  \caption{Two Wronski pairs of nonintersecting honeycomb quartic curves.}
  \label{fig:rhoquartics}
\end{figure}	

Finally, we attempted to compute the eliminant of the ideal corresponding to \eqref{eq:tosolve2} using \msolve for
$\omega = \rho$. This means more precisely
that we computed the unique monic generator of $I_{\delta}\cap \mathbb{Q}[t]$. This can be done with a Gröbner basis computation by
computing a Gröbner basis for $I_{\delta}$ with a monomial order eliminating $x$ and $y$. Again, \msolve does this
probabilistically using multi-modular methods. We then computed each real root of this eliminant. The data is summarized
in Table \ref{tbl:msolve2}. We note that these computations seem to be much better behaved than the computations
recorded in Table~\ref{tbl:msolve}. One possible explanation for this is that we observed that the bitsize of the
coefficients of these eliminants was relatively small. This means that \msolve requires only relatively few primes to
construct the result in a multi-modular way. Despite these improvements over the computation before we still note that the computations get
rapidly more difficult with growing $\delta$.

\begin{table*}[hbt!]
  \caption{Further results using \msolve}
  \label{tbl:msolve2}
  \begin{center}
    \begin{tabular}{l|lll}
      $\omega$ & \multicolumn{3}{c}{$\rho$} \\
      $\delta$ & degree & \# real & time \\
      \midrule
      3 & 6 & 0 & 1.1 \\
      5 & 60 & 2 & 0.0 \\
      7 & 204 & 4 & 8.61 \\
      9 & 360 & 4 & 400.9 \\
      11 & 969 & 13 & 49671.7 \\
    \end{tabular}
  \end{center}
\end{table*}

The results in Table \ref{tbl:msolve2} imply (probabilistically) that for all odd $3\leq \delta \leq 11$ the eliminant in $t$ is not zero.
We follow the convention common in numerical algebraic geometry to add the \enquote{*} to our theorem.
Here it indicates that the computations verifying it do not constitute a full proof but only that the result is probably true, with high certainty.

\begin{theorem*}
  For $\delta \leq 11$ and $\omega = \rho$ there exists $t_0> 0$ such that the system \eqref{eq:tosolve2} has no real solutions for any $0< t \leq t_0$.
  Consequently, the associated pairs of Wronski curves have at least $\delta$ real points of interection.
\end{theorem*}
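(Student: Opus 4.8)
The plan is to reduce the theorem to the computational facts recorded in Tables~\ref{tbl:msolve} and~\ref{tbl:msolve2}, together with Proposition~\ref{prop:orientability-triangle} and Theorem~\ref{thm:Soprunova+Sottile}. First I would fix $\omega=\rho$ and recall from Remark on $\rho$ that $\rho(i,j)=i^2+j^2+ij$ is the image under $\tau$ of the quadratic lifting \eqref{eq:lifting}; hence $\rho$ lies in the secondary cone of the alcoved, equivalently honeycomb, triangulation, so it does induce the honeycomb triangulation of $\delta\Delta_2$ and satisfies the inequalities of Proposition~\ref{prop:liftfunc}. Next, since $\delta$ is odd, Proposition~\ref{prop:orientability-triangle} guarantees that the smooth locus of $Y_\cA^+$ is orientable, so the orientability hypothesis of Theorem~\ref{thm:Soprunova+Sottile} is met. (One should also note that the honeycomb triangulation of $\delta\Delta_2$ is unimodular, hence the spherical toric variety $Y_\cA^+$ is in fact smooth, so ``smooth locus'' is the whole variety; this is what lets us invoke Theorem~\ref{thm:Soprunova+Sottile} cleanly.)

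The core step is to show that there is a minimal $t_0>0$ with $t^{-1}.Y_\cA\cap E_{\rho,\kappa}=\emptyset$ for all $t\in(0,t_0]$, which by the discussion after Theorem~\ref{thm:Soprunova+Sottile} amounts to showing that the meta-system \eqref{eq:tosolve2} has no real solutions for all sufficiently small $t>0$. Here I would use the eliminant computation: \msolve produces (probabilistically) the monic generator $e_\delta(t)$ of $I_\delta\cap\QQ[t]$, and Table~\ref{tbl:msolve2} records that $e_\delta$ is nonzero of the listed degree for every odd $3\le\delta\le11$. Nonvanishing of the eliminant means the ideal $I_\delta$ is zero-dimensional after localizing away from $t=0$ — more precisely, the projection of the complex variety of \eqref{eq:tosolve2} to the $t$-line is finite, so only finitely many values of $t$ admit any complex solution at all, and in particular only finitely many admit a real solution. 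Choosing $t_0$ strictly below the smallest positive real value at which \eqref{eq:tosolve2} has a (real) solution — equivalently, below the smallest positive real root of $e_\delta$ if any such root exists, and otherwise any positive number — gives the desired $t_0$, and minimality is then automatic. Table~\ref{tbl:msolve2} in fact lists the number of real roots of $e_\delta$, so one extracts $t_0$ explicitly from those roots.

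With $t_0$ in hand, Theorem~\ref{thm:Soprunova+Sottile} applies verbatim: a general system of Wronski polynomials \eqref{eq:wronski-deformation} (equivalently \eqref{eq:wronski-triangle}) for $\rho$ and $\kappa$ has exactly $\nvol(\delta\Delta_2)=\delta^2$ complex solutions, at least $\sigma(T)$ of which are real, and $\sigma(T)=\delta$ by the signature formula quoted from \cite{JoswigZiegler:2014}. Since the two Wronski curves of a Wronski pair are cut out by two general members of this family, their common solutions are exactly the solutions of the Wronski system, so the pair intersects in at least $\delta$ real points, proving the ``consequently'' clause. The main obstacle is genuinely the middle step, and it is not a mathematical one but a computational one: the nonvanishing of $e_\delta(t)$ rests on the multi-modular Gröbner basis computation in \msolve, which is why the statement carries the ``$*$'' — the conclusion is established with high probability rather than with a certificate, and for $\delta>11$ the eliminant computation was not completed. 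Everything else in the argument is a routine assembly of results already stated in the paper.
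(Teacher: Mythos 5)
Your proposal is correct and follows essentially the same route as the paper: the nonvanishing of the eliminant $e_\delta\in I_\delta\cap\mathbb{Q}[t]$ (computed probabilistically by \msolve, Table~\ref{tbl:msolve2}) forces the set of real $t$ admitting solutions of \eqref{eq:tosolve2} to be finite, so a $t_0$ below the smallest positive real root exists, and then Proposition~\ref{prop:orientability-triangle}, the signature $\sigma(T)=\delta$, and Theorem~\ref{thm:Soprunova+Sottile} give the at least $\delta$ real intersection points, with the \enquote{*} marking the probabilistic nature of the Gröbner computation. Only two small imprecisions, neither affecting the argument: nonvanishing of the eliminant gives finiteness of the projection to the $t$-line but not zero-dimensionality of $I_\delta$ away from $t=0$ (fibers over roots of $e_\delta$ could a priori be positive-dimensional), and smoothness of $Y_\cA^+$ follows from $\delta\Delta_2$ being a smooth polytope rather than from unimodularity of the triangulation.
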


We close with examples for $\delta = 4$ for which Theorem \ref{thm:Soprunova+Sottile} does not apply because the orientability assumption fails.
In \cite[\S8.3]{Sottile:2011} Sottile gives lower bounds for very special nonorientable systems.
Our examples suggest that it may be difficult to find a general nontrivial lower bound for nonorientable Wronski pairs; cf.\ \cite[\S7.4(1)]{Sottile:2011}.

\begin{example}
  \label{exmp:quartics}
  In Figure \ref{fig:rhoquartics}, we depict two families of Wronski pairs for $\delta = 4$.
  Since the latter number is even, the smooth locus of $Y_\cA^+$ is not orientable, due to Proposition~\ref{prop:orientability-triangle}.
  In both cases, the lifting function $\rho$ gives no real solutions.
  The parameters for the first Wronski pairs (on the left) are $(c_0, c_1, c_2)=(0.99,2.98,1.95)$, $(c_0',c_1',c_2')=(14.46, 1.57, 2.21)$.
  The second set of parameters (on the right) are $(c_0,c_1,c_2)=(-10.46,-1.07,9.43)$, $(c_0',c_1',c_2')=(12.62, 9.97, -0.86)$. In both cases $t$ ranges from 0.96 to 1.
\end{example}

\subsection*{Technical details} The computations were performed on a single core of an Intel Xeon Gold 6244 CPU @
3.60GHz with 1.5 TB of memory. We used version 2.11.0 of \HC and version 0.7.3 of \msolve via \OSCAR v1.2.0 \cite{oscar-book}.

\printbibliography

\end{document}